\newcommand{\la}{\lambda}
\newcounter{mylisti} \newcounter{mylistii}
\newcounter{nest}
\newcommand{\defaultlabel}{}
\newcommand{\bn}{\ensuremath{\mathbb N}}
\newcommand{\br}{\ensuremath{\mathbb R}}
\newcommand{\bz}{\ensuremath{\mathbb Z}}
\newcommand{\cC}{\ensuremath{\mathcal C}}
\newcommand{\cP}{\ensuremath{\mathcal P}}
\newcommand{\cU}{\ensuremath{\mathcal U}}
\newcommand{\cV}{\ensuremath{\mathcal V}}
\newcommand{\diam}{\operatorname{diam}}
\newcommand{\ds}{\displaystyle}
\newtheorem{thm}{Theorem}
\newtheorem{lem}{Lemma}
\newtheorem{claim}{Claim}
\newtheorem{prop}{Proposition}
\newtheorem{cor}{Corollary}
\newtheorem{problem}{Problem}
\theoremstyle{definition}
\newtheorem{defn}{Definition}
\theoremstyle{remark}
\newcommand{\eps}{\varepsilon}
\newcommand{\xn}{\ensuremath{(x_n)_{n=1}^\infty}}
\newcommand{\yn}{\ensuremath{(y_n)_{n=1}^\infty}}
\newcommand{\xbgq}{\ensuremath{\left(\sum_{n=1}^\infty\ell_{p_n}\right)_{\ell_q}}}
\begin{document}

\title[Tight embeddability of proper and stable metric spaces]{Tight embeddability of proper and stable metric spaces}

\author[F.~Baudier]{F.~Baudier$^\dag$}
\address{$\dag$ Institut de Math\'ematiques Jussieu-Paris Rive Gauche, Universit\'e Pierre et Marie Curie, Paris, France and Department of Mathematics, Texas A\&M University, College Station, TX 77843, USA (current)}
\email{flo.baudier@imj-prg.fr, florent@math.tamu.edu (current)}
\author[G.~Lancien]{G.~Lancien$^\ddag$}
\address{$\ddag$ Laboratoire de Math\'ematiques de Besan\c con, Universit\'e de Franche-Comt\'e, 16 route de Gray, 25030 Besan\c con C\'edex, France}
\email{gilles.lancien@univ-fcomte.fr}
\date{}

\thanks{The first author's research was partially supported by ANR-13-PDOC-0031, project
  NoLiGeA}
\keywords{}
\subjclass[2010]{46B85, 46B20}

\begin{abstract} We introduce the notions of almost Lipschitz embeddability and nearly isometric embeddability. We prove that for $p\in [1,\infty]$, every proper subset of $L_p$ is almost Lipschitzly embeddable into a Banach space $X$ if and only if $X$ contains uniformly the $\ell_p^n$'s. We also sharpen a result of N. Kalton by showing that every stable metric space is nearly isometrically embeddable in the class of reflexive Banach spaces.
\end{abstract}

\maketitle

\section{Introduction}
Let $(X,d_X)$ and $(Y,d_Y)$ be two metric spaces and $f$ a map from $X$ into $Y$. We define the compression modulus of $f$ by $\rho_f(t):=\inf\{d_Y(f(x),f(y))\colon d_X(x,y)\geq t\},$ and the expansion modulus by $\omega_f(t):=\sup\{d_Y(f(x),f(y))\colon d_X(x,y)\leq t\}.$ One can define various notions of embeddability by requiring the compression and the expansion moduli to satisfy certain properties. For instance, $X$ is said to be bi-Lipschitzly embeddable into $Y$ if there exists a map from $X$ into $Y$ with a sublinear expansion modulus and a superlinear compression modulus. The notion of bi-Lipschitz embeddability is arguably one of the most studied and important such notion, due to its numerous and fundamental applications in computer science, as well as its ubiquity in theoretical mathematics. Despite being much more flexible than isometric embeddability, the notion of bi-Lipschitz embeddability is still rather restrictive. Certain natural relaxations have been considered and heavily studied over the years (e.g. uniform embeddability in geometric Banach space theory \cite{BenyaminiLindenstrauss2000}). A few of them turned out to be extremely useful in various scientific fields such as: coarse embeddability in topology \cite{NowakYu2012}, quasi-isometric embeddability in geometric group theory \cite{Gromov1993}, or range embeddability in computer science (e.g. for proximity problems including nearest neighbor search or clustering, cf. \cite{OstrovskyRabani2002} and \cite{BartalGottlieb2014}).

In this article we introduce two new notions of metric embeddability that are slightly weaker than bi-Lipschitz embeddability but much stronger than strong embeddability. Recall that $X$ is coarsely embeddable (resp. uniformly embeddable) into $Y$ if there exists $f:X\to Y$ such that $\lim_{t\to \infty}\rho_f(t)=\infty$ and $\omega_f(t)<\infty$ for every $t\in(0,\infty)$ (resp. $\lim_{t\to 0}\omega_f(t)=0$ and $\rho_f(t)>0$ for every $t\in(0,\infty)$). $X$ is strongly embeddable into $Y$ if it is simultaneously coarsely and uniformly embeddable, i.e. there exists $f:X\to Y$ such that $\lim_{t\to \infty}\rho_f(t)=\infty$, $\lim_{t\to 0}\omega_f(t)=0$, and $(\rho_f(t),\omega_f(t))\in(0,\infty)^2$ for every $t\in(0,\infty)$. As we will see our new notions also imply range embeddability in the sense of Bartal and Gottlieb \cite{BartalGottlieb2014}. We also want to point out that our approach and the way we relax the notion of bi-Lipschitz embeddability differs from the previously mentioned above. In all the notions above two classes of (real valued) functions with a certain behavior are fixed (one for each of the two moduli), and one asks for the existence of a \textit{single} embedding whose compression and expansion moduli belong to the related classes. In our approach we still fix two classes of functions with some prescribed properties, but we require the existence of a \textit{collection} of embeddings whose compression and expansion moduli belong to the related classes. In Section \ref{propersection} we introduce the notion of almost Lipschitz embedabbility, and prove that every proper subset of $L_p$ with $p\in[1,\infty]$ is almost Lipschitzly embeddable into any Banach space containing uniformly the $\ell_p^n$'s. In Section \ref{stablesection} we introduce the notion of nearly isometric embeddability. We then show, using a slight modification of a result of Kalton in \cite{Kalton2007}, that any stable metric space is nearly isometrically embeddable into the class of reflexive Banach spaces. We discuss the optimality of our results in Section \ref{optimality}. Finally in the last section we discuss the relationship between the various notions of embeddability and raise a few questions.

\section{Almost Lipschitz embeddability of proper metric spaces}\label{propersection}
A metric space is said to be proper if all its closed balls are compact. The class of proper metric spaces is rather large since it obviously contains all locally finite or compact metric spaces, but also all finite-dimensional Banach spaces and all compactly generated groups. The main result of \cite{Baudier2012} states that every proper metric space is strongly embeddable into any Banach space with trivial cotype. More precisely, an embedding $f$ that is Lipschitz and such that $\rho_f$ behaves essentially like $t\mapsto\frac{t}{\log^2(t)}$ (up to some universal constants), is constructed. This embedding is ``almost'' bi-Lipschitz, up to the logarithmic factor. An equivalent reformulation of the main result of \cite{Baudier2012} will say that every proper subset of $L_\infty$ is strongly embeddable into any Banach space containing the $\ell_\infty^n$'s uniformly. Two problems naturally arise. Is it possible to improve the quality of the embedding, and get even closer to the properties of a bi-Lipschitz one? Does an analogue result hold where $L_\infty$ and $\ell_\infty^n$'s are replaced by $L_p$ and $\ell_p^n$'s, respectively? The construction in \cite{Baudier2012} combines a gluing technique introduced in \cite{Baudier2007}, that is tailored for locally finite metric spaces (as shown in \cite{BaudierLancien2008}, and ultimately in \cite{Ostrovskii2012}), together with a net argument suited for proper spaces, whose inspiration goes back to \cite{BenyaminiLindenstrauss2000}. Due to the technicality of this construction, it is difficult and unclear how to make any progress towards the two problems. Indeed, the gluing technique requires a slicing of the proper space with annuli of increasing widths, while the net argument involves nets with certain separation parameters, but the widths and the separation parameters were correlated in \cite{Baudier2012}. In this section we answer affirmatively the two questions above. We first ``decorrelate'' the ingredients of the construction from \cite{Baudier2012}, in the sense that the widths of the annuli and the separation parameters can be chosen independently from each other. Doing so we have much more flexibility to construct far better behaved embeddings. Finally the collection of local embeddings that will ultimately be pasted together to produce the global embedding desired, are obtained using the bounded approximation property instead of Fr\'echet type embeddings, which allow us to treat all the $L_p$ spaces at once. Eventually we obtain a construction that provides tight embeddings in much greater generality, and that is significantly simpler. We start by introducing the notion of \textit{almost Lipschitz embeddability}.

\begin{defn}\label{almostLip} Let $\Phi:=\{\varphi\colon[0,\infty)\to [0,1)\ |\ \varphi \textrm{ is continuous, } \varphi(0)=0,\textrm{ and } \varphi(t)>0 \textrm{ for all }t>0\}.$ We say that $(X,d_X)$ almost Lipschitzly embeds into $(Y,d_Y)$ if there exist a scaling factor $r\in(0,\infty)$, a constant $D\in[1,\infty)$, and a family $(f_\varphi)_{\varphi\in\Phi}$ of maps from $X$ into $Y$ such that for all $t\in(0,\infty),$ $\omega_{f_\varphi}(t)\le Drt\ \ {\rm and}\ \ \rho_{f_\varphi}(t)\ge rt\varphi(t).$
In other words, $(X,d_X)$ is almost Lipschitz embeddable into $(Y,d_Y)$ if there exist a scaling factor $r\in(0,\infty)$ and a constant $D\in[1,\infty)$, such that for any continuous function $\varphi\colon [0,+\infty)\to [0,1)$ satisfying $\varphi(0)=0$ and $\varphi(t)>0$ for all $t>0$, there exists a map $f_\varphi\colon X\to Y$ such that for all $x,y\in X$,
$$\varphi(d_X(x,y))rd_X(x,y)\le d_Y(f_\varphi(x),f_\varphi(y))\le Drd_X(x,y).$$
\end{defn}

It is clear from Definition \ref{almostLip} that if $X$ admits a bi-Lipschitz embedding into $Y$, then $X$ almost Lipschitzly embeds into $Y$. The notion of almost Lipschitz embeddability expresses the fact that one can construct an embedding that is as close as one wishes to a bi-Lipschitz embedding. Note also that if $X$ almost Lipschitz embeds into $Y$, then $X$ strongly embeds into $Y$. More precisely for every $\varphi\in\Phi$, $X$ admits a strong embedding into $Y$, whose quality depends on the properties of the function $\varphi$.

In the sequel it will be interesting to notice that to achieve almost Lipschitz embeddability, it is sufficient to consider only exponential-type functions. We record this fact in Lemma \ref{reduction}.

\begin{lem}\label{reduction}
For any continuous function $\varphi\colon [0,+\infty)\to [0,1)$ such that $\varphi(0)=0$ and $\varphi(t)>0$ for all $t>0$, there exists a continuous non-decreasing surjective function $\mu\colon (0,+\infty)\to (-\infty,0)$, so that $\varphi(t)\le2^{\mu(t)}$ for all $t>0$.
\end{lem}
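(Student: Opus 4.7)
The plan is to pass to logarithms and work with $\psi(t) := \log_2 \varphi(t)$, which maps $(0,\infty)$ continuously into $(-\infty, 0)$ and tends to $-\infty$ as $t \to 0^+$ (since $\varphi(t) \to 0$). It then suffices to produce a continuous, non-decreasing surjection $\mu : (0,\infty) \to (-\infty, 0)$ with $\mu \ge \psi$; exponentiating both sides yields $\varphi(t) \le 2^{\mu(t)}$ as required.

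First I would set $M(t) := \sup_{0 < s \le t} \psi(s)$. Since $\psi(s) \to -\infty$ as $s \to 0^+$, for each fixed $t > 0$ there is some $a \in (0,t]$ beyond which $\psi < \psi(t) - 1$, so the supremum is attained on the compact interval $[a,t]$; hence $M(t) < 0$, and the same argument gives $M(t) \to -\infty$ as $t \to 0^+$. Monotonicity and the inequality $M \ge \psi$ are built in. Continuity is the only delicate point: left-continuity of $M$ is automatic from continuity of $\psi$, and for right-continuity, given $\varepsilon > 0$ I would use uniform continuity of $\psi$ on a compact neighborhood of $t$ to find $\delta > 0$ so that $\psi(u) < \psi(t) + \varepsilon \le M(t) + \varepsilon$ on $(t, t+\delta]$, whence $M(t+\delta) \le M(t) + \varepsilon$.

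The remaining obstacle is that $M$ need not be surjective: its supremum $\sup_{s > 0} \psi(s)$ could be strictly negative (it is only guaranteed to be $\le 0$). To repair this I would take the pointwise maximum with an auxiliary continuous non-decreasing function $g : (0,\infty) \to (-\infty, 0)$ satisfying $g(t) \to -\infty$ as $t \to 0^+$ and $g(t) \to 0$ as $t \to \infty$, for instance $g(t) = -1/t$. Then $\mu(t) := \max(M(t), -1/t)$ remains continuous (max of two continuous functions), non-decreasing, strictly negative, and dominates $\psi$; moreover $\mu(t) \to -\infty$ as $t \to 0^+$ and $\mu(t) \to 0$ as $t \to \infty$, so the intermediate value theorem yields surjectivity onto $(-\infty, 0)$. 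The main obstacle is really just the verification of continuity of the running supremum $M$; the rest is a routine adjustment to enforce the prescribed range.
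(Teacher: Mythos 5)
Your proof is correct and follows essentially the same route as the paper: both hinge on taking a running supremum to enforce monotonicity and continuity, and both patch up surjectivity by taking a maximum with an auxiliary function ($-1/t$ in your case, $1-e^{-t}$ before the supremum in the paper's). The only difference is the order of operations (you take $\log_2$ before the supremum rather than after), which is immaterial, and you supply more detail on the continuity of the running supremum than the paper does.
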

\begin{proof} By replacing $\varphi$ by $t\mapsto\max\{\varphi(t),1-e^{-t}\}$, we may assume that $\varphi \colon [0,+\infty)\to [0,1)$ is onto. Then, let
$$\begin{array}{rcl}
     \lambda\ :\ (0,+\infty)  & \rightarrow & (0,1)\\
       &   &  \\
     t & \mapsto & \sup\{\varphi(s)\colon s\in(0,t]\},\\
    \end{array}$$

and define
$$\begin{array}{rcl}
     \mu\ :\ (0,+\infty)  & \rightarrow & (-\infty, 0)\\
       &   &  \\
     t & \mapsto &\log_2\lambda(t).
    \end{array}$$

It is easy to verify that:
\begin{enumerate}[(a)]
\item $\varphi(t)\le\lambda(t)=2^{\mu(t)}$, for all $t>0$.
\item $\lambda$ is continuous, and hence $\mu$ is continuous as well.
\item $\mu$ is non-decreasing and surjective onto $(-\infty,0)$.
\end{enumerate}
\end{proof}
It is worth noticing that almost Lipschitz embeddability implies the existence of range embeddings with arbitrarily large unbounded ranges. Let $I$ be an interval in $[0,\infty)$. Following \cite{BartalGottlieb2014} we say that $X$ admits a bi-Lipschitz range embedding with range $I$, or simply an $I$-range embedding, into $Y$ if there exist $f\colon X\to Y$, a scaling factor $r\in(0,\infty)$, and a constant $D\in[1,\infty)$, such that $rd_X(x,y)\le d_Y(f(x),f(y))\le Drd_X(x,y)$, whenever $d_X(x,y)\in I$. It is easily checkable that if $X$ almost Lipschitzly embeds into $Y$, then there exist a scaling factor $r\in(0,\infty)$, and a constant $D\in[1,\infty)$, such that for every $s\in(0,\infty)$ by choosing $\varphi$ appropriately in $\Phi$, there exists a map $f_s\colon X\to Y$ so that $ rd_X(x,y)\le d_Y(f_s(x),f_s(y))\le Drd_X(x,y)$, for every $x,y\in X$ satisfying $d_X(x,y)\in[s,\infty)$. Therefore, we have:

\begin{prop}\label{range} If $X$ is almost Lipschitzly embeddable into $Y$ then for every $s\in(0,\infty)$, $X$ admits a $[s,\infty)$-range embedding into $Y$
\end{prop}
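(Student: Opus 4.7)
The plan is to exploit the universal quantifier over $\varphi\in\Phi$ in Definition~\ref{almostLip}. Since a $[s,\infty)$-range embedding only demands a bi-Lipschitz control on pairs $(x,y)$ with $d_X(x,y)\ge s$, it suffices to choose $\varphi\in\Phi$ that is bounded below by a positive constant on the interval $[s,\infty)$; the almost Lipschitz estimate then immediately upgrades to a genuine bi-Lipschitz estimate on the prescribed range, at the cost of absorbing that constant into the scaling factor and the distortion.

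Concretely, fix $s\kin(0,\infty)$ and define $\varphi_s\colon[0,\infty)\to[0,1)$ by
\[
\varphi_s(t)=\begin{cases} \dfrac{t}{2s}, & 0\le t\le s,\\[3pt] \dfrac{1}{2}, & t\ge s. \end{cases}
\]
Then $\varphi_s$ is continuous, satisfies $\varphi_s(0)=0$ and $\varphi_s(t)>0$ for $t>0$, and takes values in $[0,1/2]\subset[0,1)$, so $\varphi_s\kin\Phi$. Applying the almost Lipschitz embeddability assumption with this choice of $\varphi$ produces constants $r\kin(0,\infty)$ and $D\kin[1,\infty)$ (independent of $s$) and a map $f_s:=f_{\varphi_s}\colon X\to Y$ for which
\[
\varphi_s(d_X(x,y))\,r\,d_X(x,y)\le d_Y(f_s(x),f_s(y))\le D\,r\,d_X(x,y)\qquad(x,y\in X).
\]

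Now whenever $d_X(x,y)\ge s$ one has $\varphi_s(d_X(x,y))=1/2$, so the displayed inequality reduces to
\[
\tfrac{r}{2}\,d_X(x,y)\le d_Y(f_s(x),f_s(y))\le D\,r\,d_X(x,y).
\]
Setting $r':=r/2$ and $D':=2D$ gives $r'd_X(x,y)\le d_Y(f_s(x),f_s(y))\le D'r'd_X(x,y)$ for all $x,y\in X$ with $d_X(x,y)\kin[s,\infty)$, which is precisely a $[s,\infty)$-range embedding with constants $r'$ and $D'$ independent of $s$. Since no step presents any real obstacle---Lemma~\ref{reduction} is not even needed because the choice of $\varphi_s$ is dictated only by the trivial requirement of staying bounded away from zero past $s$---the argument is essentially immediate once one notes this flexibility in the $\varphi$-parameter.
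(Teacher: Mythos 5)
Your proof is correct and follows the same route the paper takes (the paper treats this as "easily checkable" in the paragraph preceding Proposition \ref{range}): pick a $\varphi\in\Phi$ that is bounded below by a fixed positive constant on $[s,\infty)$ and absorb that constant into the scaling factor and distortion. Your explicit choice of $\varphi_s$ and the bookkeeping with $r'=r/2$, $D'=2D$ are exactly what is needed, and the constants are indeed independent of $s$, as the paper also asserts.
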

The notion of almost Lipschitz embeddability can be thought of as a quantitative refinement of strong embeddability. This quantitative notion of embeddability is much finer than simply estimating the large-scale and small-scale quantitative behaviors of the compression and expansion moduli of strong embeddings, and is of course much harder to obtain. It follows from Proposition \ref{range} that if $X$ is almost Lipschitzly embeddable into $Y$ then $X$ admits a quasi-isometric embedding into $Y$ in the sense of Gromov. Note also that the notion of almost Lipschitz embeddability does not forget any intermediate scales whereas strong embeddability might.

\medskip

Before stating the main result of this section, we need to recall some basic facts and definitions about the local theory of Banach spaces. For $\lambda\in[1,\infty)$ and $p\in[1,\infty]$, we say that a Banach space $X$ contains the $\ell_p^n$'s $\lambda$-uniformly if there is a sequence of subspaces $(E_n)_{n=1}^\infty$ of X such that $E_n$ is linearly isomorphic to $\ell_p^n$ with $d_{BM}(E_n,\ell_p^n)\le \lambda$, where $d_{BM}$ denotes the Banach-Mazur distance between Banach spaces. We say that $X$ contains the $\ell_p^n$'s uniformly if it contains the $\ell_p^n$'s $\lambda$-uniformly for some $\lambda\in[1,\infty)$. Let $X$ be a Banach space, $p\in [1,2]$ and $q\in [2,\infty]$ and let $(\eps_i)_{i=1}^\infty$ be a sequence of independent Rademacher variables on a probability space $\Omega$.\\
We say that $X$ has type $p$ if there exists $T\in(0,\infty)$ such that for all $x_1,\dots,x_n \in X$

$$\big\|\sum_{i=1}^n \eps_ix_i\big\|_{L_2(\Omega,X)}\le T (\sum_{i=1}^n \|x_i\|^p)^{1/p},$$
and $X$ has cotype $q$ if there exists $C\in(0,\infty)$ such that for all $x_1,\dots,x_n \in X$

$$\left(\sum_{i=1}^n \|x_i\|^q\right)^{1/q}\le C\big\|\sum_{i=1}^n \eps_ix_i\big\|_{L_2(\Omega,X)}.$$

The following statement gathers the deep links between these notions that have been proved in the works of James \cite{James1964}, Krivine \cite{Krivine1976}, Maurey and Pisier \cite{MaureyPisier1976}.

\begin{thm}\label{JKMP} Let $X$ be an infinite-dimensional Banach space. If we denote
$$p(X)=\sup\{p\ \colon\ X\ {\rm is\ of\ type}\ p\}\ \ {\rm and}\ \ q(X)=\inf\{q\ \colon \ X\  {\rm is\ of\ cotype}\ q\},$$
then $X$ contains the $\ell_{p(X)}^n$'s and the $\ell_{q(X)}^n$'s uniformly.\\
Moreover, if $X$ contains the $\ell_r^n$'s uniformly, then $r\in [p(X),q(X)]$ and $X$ contains the $\ell_r^n$'s $\lambda$-uniformly, for all $\lambda>1$.
\end{thm}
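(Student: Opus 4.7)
The statement is a packaging of two classical theorems of local Banach space theory: the Maurey--Pisier theorem, which produces uniform copies of $\ell_{p(X)}^n$ and $\ell_{q(X)}^n$, and Krivine's theorem, which upgrades $\lambda$-uniform containment to $(1+\varepsilon)$-uniform containment for every $\varepsilon>0$. My plan is to quote these two theorems as black boxes and give only the short direct verification of the inclusion $r\in[p(X),q(X)]$, since that is the one part of the statement that has a truly elementary proof.

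For this inclusion, assume $X$ contains the $\ell_r^n$'s $\lambda$-uniformly via isomorphisms $T_n\colon \ell_r^n\to E_n\subseteq X$ satisfying $\|y\|_{\ell_r^n}\le\|T_n y\|\le\lambda\|y\|_{\ell_r^n}$. Set $x_i^{(n)}:=T_n e_i$, where $(e_i)_{i=1}^n$ is the unit vector basis of $\ell_r^n$. Then $\|x_i^{(n)}\|\le\lambda$, while pointwise in $\Omega$ one has $\bignorm{\sum_{i=1}^n\varepsilon_i x_i^{(n)}}\ge\bignorm{\sum_{i=1}^n\varepsilon_i e_i}_{\ell_r^n}=n^{1/r}$. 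If $X$ were of type $p>r$ with constant $T$, this would force $n^{1/r}\le \bignorm{\sum_{i=1}^n\varepsilon_i x_i^{(n)}}_{L_2(\Omega,X)}\le T\bigl(\sum_{i=1}^n\|x_i^{(n)}\|^p\bigr)^{1/p}\le T\lambda\, n^{1/p}$, which fails as $n\to\infty$; hence $p(X)\le r$. A symmetric computation using cotype $q<r$ yields $n^{1/q}\le C\, n^{1/r}$, again impossible for large $n$, so $r\le q(X)$.

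Once $r\in[p(X),q(X)]$ is secured, the Maurey--Pisier theorem produces sequences witnessing uniform finite representability of $\ell_{p(X)}$ and $\ell_{q(X)}$ in $X$; the standard proof passes to an appropriate ultrapower $X^{\mathcal U}$, extracts symmetric basic sequences via a Ramsey/spreading-model argument, and invokes a compactness/continuity argument for the set of $r$ such that $\ell_r$ is finitely representable in $X$. Krivine's theorem then refines any such $\lambda$-uniform sequence to a $(1+\varepsilon)$-uniform one by taking a spreading model and then iterated block sums whose normalized distributions converge to the isometric $\ell_r^n$-pattern. The main obstacle here is that Maurey--Pisier and Krivine are genuinely substantive theorems whose proofs cannot be compressed into a short sketch, and I would therefore treat them as cited input from \cite{James1964}, \cite{Krivine1976}, and \cite{MaureyPisier1976} rather than attempting to reprove them.
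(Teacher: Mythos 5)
Your treatment matches the paper's: Theorem~\ref{JKMP} is stated there without proof, precisely as a compilation of the classical results of James, Krivine, and Maurey--Pisier, so quoting those as black boxes is exactly what the authors do. Your added elementary verification that $r\in[p(X),q(X)]$ (testing the type and cotype inequalities on the images $T_ne_i$ of the unit vector basis, where each sign choice gives $\bignorm{\sum_i\varepsilon_iT_ne_i}\asymp n^{1/r}$ while $\|T_ne_i\|\asymp 1$) is correct and is a welcome supplement to the bare citation.
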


The following corollary will be useful to us. It is certainly well-known, but we include a proof for the sake of completeness.

\begin{cor}\label{finitecodim} Let $Y$ be an infinite-dimensional Banach space and $Z$ a finite co-dimensional subspace of $Y$. Assume that $Y$ contains the $\ell_p^n$'s uniformly, then $Z$ contains the $\ell_p^n$'s $\lambda$-uniformly, for all $\lambda>1$.
\end{cor}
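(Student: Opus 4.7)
The plan is to combine Krivine's theorem (the final assertion of Theorem~\ref{JKMP}) with an elementary disjointly--supported block construction inside $\ell_p^N$.

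First, I would upgrade the hypothesis. Fix $\lambda>1$. By the last assertion of Theorem~\ref{JKMP}, $Y$ contains the $\ell_p^N$'s $\lambda$-uniformly, so for every $N\in\bn$ there exist a subspace $E_N\subseteq Y$ and a linear isomorphism $T\colon\ell_p^N\to E_N$ with $\|T\|\cdot\|T^{-1}\|\le\lambda$. In other words we may work from the outset with $\lambda$-isomorphic copies of $\ell_p^N$ of arbitrarily large dimension inside $Y$.

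Next, set $k:=\codim_Y(Z)$ and, given $n\in\bn$, choose $N:=n(k+1)$. Put $W:=E_N\cap Z$. The restriction to $E_N$ of the quotient map $Y\to Y/Z$ has kernel $W$ and lands in a space of dimension at most $k$, so $\codim_{E_N}(W)\le k$. Transporting through $T$, the subspace $\widetilde W:=T^{-1}(W)\subseteq\ell_p^N$ is the joint kernel of at most $k$ linear functionals $\phi_1,\dots,\phi_k$ on $\ell_p^N$.

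The core step is the block construction. Partition the standard basis $\{e_1,\dots,e_N\}$ of $\ell_p^N$ into $n$ consecutive blocks $I_1,\dots,I_n$, each of cardinality $k+1$. For each $j$, the restrictions of $\phi_1,\dots,\phi_k$ to the $(k+1)$-dimensional coordinate subspace $\spn\{e_i\colon i\in I_j\}$ form $k$ linear functionals on a space of dimension $k+1$, so their joint kernel contains a unit vector $v_j$. By construction $v_j\in\widetilde W$, $\|v_j\|_p=1$, and $\supp(v_j)\subseteq I_j$. Since the blocks $I_j$ are pairwise disjoint, the family $(v_j)_{j=1}^n$ is $1$-equivalent to the canonical basis of $\ell_p^n$ (this works verbatim for $p=\infty$ as well), producing an isometric copy of $\ell_p^n$ inside $\widetilde W$. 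Applying $T$ transports this copy into $W\subseteq Z$ with Banach--Mazur distance to $\ell_p^n$ at most $\|T\|\cdot\|T^{-1}\|\le\lambda$, which is exactly what is needed.

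The only non-formal moment is the block construction, and even that reduces to a dimension count: choosing blocks of size one larger than the number of constraints guarantees a non-trivial kernel vector supported on each block, while the disjointness of the supports is precisely what makes the resulting sequence $1$-equivalent to the $\ell_p^n$-basis. This last feature is intrinsic to the $\ell_p$ structure and is the reason the argument does not extend beyond $\ell_p^n$-containment.
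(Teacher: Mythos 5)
Your proof is correct, and it takes a genuinely different (and arguably cleaner) route than the paper's. The paper splits into two cases: for $p=1$ it argues via the invariance of the type index under passing to finite-codimensional subspaces, and for $p\in(1,\infty]$ it writes $Y=Z\oplus F$, transports the coordinate functionals of the projection onto $F$ into $\ell_q^N$ with $q<\infty$, runs a counting argument to find $n$ basis vectors $e_j$ on which these functionals are uniformly small, and then perturbs: $I-P$ is a $2$-isomorphism from $\spn\{e_j\colon j\in A\}$ into $Z$. This yields only a $4$-uniform containment, so Krivine's theorem (the last assertion of Theorem~\ref{JKMP}) is invoked at the end to upgrade to $\lambda$-uniform. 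Your argument instead intersects $E_N$ directly with $Z$, notes that the resulting subspace has codimension at most $k$ in $E_N$, and uses the disjointly-supported block construction to find an exactly isometric copy of $\ell_p^n$ inside the transported kernel $\widetilde W\subseteq\ell_p^N$. The payoffs are that you treat all $p\in[1,\infty]$ uniformly (no separate $p=1$ case, since you never need the conjugate exponent to be finite), you need no perturbation and hence lose nothing in the constant beyond the initial $\|T\|\,\|T^{-1}\|\le\lambda$, and you invoke Krivine only once, at the start. The paper's counting argument, by contrast, does not require any control on the dimension $N$ beyond ``large enough'' relative to $\eps$ and $n$, but that is not a meaningful advantage here. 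Both proofs are valid; yours is the more self-contained of the two.
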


\begin{proof} Assume first that $p=1$. We deduce that $p(Y)=1$. It is easily checked that $p(Y)=p(Z)$. Then the conclusion follows from Theorem \ref{JKMP}.\\
Assume now that $p\in (1,+\infty]$ and let $q\in [1,\infty)$ be the conjugate exponent of $p$. We can write $Y=Z\oplus F$, where $F$ is finite dimensional. There exist $f_1,\dots,f_m$ in the unit sphere of $F$ and $u_1,\dots,u_m\in Y^*$ such that $P(x)=\sum_{i=1}^m u_i(x)f_i$. Let us fix $n\in \mathbb N$. We will look for a copy of $\ell_p^n$ in $Z$. For that purpose we choose $\eps>0$ (very small and to be chosen later) and $N\in \mathbb N$ large enough (the size of $N$ will depend on $n$ and $\eps$ and also be made precise later). Then there exists a subspace $X_N$ of $Y$ and an isomorphism $T_N:\ell_p^N \mapsto X_N$ with $\|T_N\|\le 1$ and $\|T_N^{-1}\|\le 2$. We denote $(e_j)_{j=1}^N$ the image by $T_N$ of the canonical basis of $\ell_p^N$. Denote $v_i$ the restriction of $u_i$ to $X_N$. Then $v_i \circ T_N \in (\ell_p^N)^*=\ell_q^N$ and $\|v_i \circ T_N\|_q\le \|u_i\|$. Then the cardinality of $\{j\in\{1,\dots,N\}\colon |u_i(e_j)|\ge \eps\}$ is at most $\frac{\|u_i\|^q}{\eps^q}$. If $N$ was chosen large enough, then there is a subset $A$ of $\{1,\dots,N\}$ of cardinality $n$ such that for all $j\in A$ and all $i\in \{1,\dots,m\}$, $|u_i(e_j)|< \eps$. Let $X_A$ be the linear span of $\{e_j\colon j\in A\}$ and note that $X_A$ is 2-isomorphic to $\ell_p^n$. Then for all $i\in \{1,\dots,m\}$, the norm of the restriction to $X_A$ of $u_i$ is at most $n\eps$. It follows that the norm of the restriction of $P$ to $X_A$ does not exceed $nm\eps$. Therefore, if $\eps$ was chosen small enough, $I-P$ is an isomorphism from $X_A$ onto its image in $Z$, with distortion less than 2. We have shown that $Z$ contains the $\ell_p^n$'s 4-uniformly. Finally, we use Theorem \ref{JKMP} to conclude that $Z$ contains the $\ell_p^n$'s $\lambda$-uniformly, for all $\lambda>1$.
\end{proof}

We now have all the ingredients to state and prove the technical lemma on Banach spaces containing the $\ell_p^n$'s that will serve our purpose.

\begin{lem}\label{fdd} Let $\eps>0$, $\gamma >0$, $(d_j)_{j=0}^\infty$ a sequence in $\mathbb N$, and $Y$ be a Banach space containing uniformly the $\ell_p^n$'s. Then there exists a sequence $(H_j)_{j=0}^\infty$ of subspaces of $Y$ so that for any $j\ge 0$, $d_{BM}(H_j,\ell_p^{d_j})\le 1+\eps$, and also such that $(H_j)_{j=0}^\infty$ is a Schauder finite-dimensional decomposition of its closed linear span $Z$ with $\|P_j\|\le 1+\gamma$, where $P_j$ is the projection from $Z$ onto $H_0\oplus\dots\oplus H_j$ with kernel $\overline{\rm Span}\,(\bigcup_{i=j+1}^\infty H_i)$.
\end{lem}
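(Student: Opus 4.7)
The plan is to construct the $H_j$ inductively using a standard small-perturbation technique, where Corollary \ref{finitecodim} is the engine that keeps the inductive step alive. Fix once and for all a sequence $(\gamma_j)_{j=1}^\infty$ of positive reals with $\prod_{j=1}^\infty(1+\gamma_j)\le 1+\gamma$, and set $\delta_j:=\gamma_j/(1+\gamma_j)\in(0,1)$. These will control, respectively, the projection norms at each stage and the mesh of a net used to produce ``almost-annihilating'' functionals.

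For the base case, since $Y$ contains the $\ell_p^n$'s uniformly, Theorem \ref{JKMP} furnishes $H_0\subset Y$ with $d_{BM}(H_0,\ell_p^{d_0})\le 1+\eps$. For the inductive step, suppose $H_0,\dots,H_j$ have been chosen and set $F_j:=H_0\oplus\cdots\oplus H_j$. Since $F_j$ is finite-dimensional, pick a finite $\delta_{j+1}$-net $N_{j+1}$ in the unit sphere of $F_j$, and for each $y\in N_{j+1}$ use Hahn--Banach to select a norming functional $u_y\in Y^*$ with $\|u_y\|=1$ and $u_y(y)=1$. Let
$$Z_{j+1}:=\bigcap_{y\in N_{j+1}}\ker u_y.$$
This is a finite-codimensional subspace of $Y$, so Corollary \ref{finitecodim} ensures $Z_{j+1}$ contains the $\ell_p^n$'s $(1+\eps)$-uniformly; in particular there exists $H_{j+1}\subset Z_{j+1}$ with $d_{BM}(H_{j+1},\ell_p^{d_{j+1}})\le 1+\eps$.

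Next I would verify the FDD estimate. Take $x\in F_j$ with $\|x\|=1$ and $z\in H_{j+1}$, and pick $y\in N_{j+1}$ with $\|x-y\|<\delta_{j+1}$. Because $u_y$ vanishes on $H_{j+1}$,
$$u_y(x+z)=u_y(x)=u_y(y)+u_y(x-y)\ge 1-\delta_{j+1},$$
so $\|x+z\|\ge 1-\delta_{j+1}=(1-\delta_{j+1})\|x\|$. Consequently the algebraic projection $\pi_{j+1}\colon F_{j+1}\to F_j$ along $H_{j+1}$ has norm at most $1/(1-\delta_{j+1})=1+\gamma_{j+1}$. Composing, for any $k>j$ the natural projection $F_k\to F_j$ has norm at most $\prod_{i=j+1}^{k}(1+\gamma_i)\le 1+\gamma$, uniformly in $k$. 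This uniform bound lets us extend the partial projection to a bounded projection $P_j$ on $Z:=\cspn\bigl(\bigcup_i H_i\bigr)$ with $\|P_j\|\le 1+\gamma$, so $(H_j)_{j=0}^\infty$ is a Schauder FDD of $Z$ with the required constants.

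I expect no serious obstacle: the verification is the classical ``almost-biorthogonal functionals'' perturbation argument, and the only point specific to the present setting is the use of Corollary \ref{finitecodim} to guarantee that the finite-codimensional cut $Z_{j+1}$ still hosts nearly isometric copies of $\ell_p^{d_{j+1}}$. The one technical care required is choosing $(\gamma_j)$ summable-in-log so that the telescoping product stays below $1+\gamma$; everything else is a clean induction.
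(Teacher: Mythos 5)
Your proof is correct and follows essentially the same route as the paper: the paper's proof simply invokes ``the standard Mazur technique'' to produce the finite-codimensional subspace and calls the verification of the FDD bounds ``classical,'' whereas you spell out exactly those two steps (norming functionals on a $\delta_{j+1}$-net of the sphere of $H_0\oplus\dots\oplus H_j$, then the telescoping product bound on the partial projections). Both arguments hinge on Corollary \ref{finitecodim} to place $H_{j+1}$ inside the finite-codimensional cut, so there is nothing to change.
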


\begin{proof} Pick a sequence $(\gamma_j)_{j=0}^\infty$ with $\gamma_j>0$ and $\Pi_{j=0}^\infty (1+\gamma_j)\le 1+\gamma$. Choose first a subspace $H_0$ of $Y$ so that $d_{BM}(H_0,\ell_p^{d_0})\le 1+\eps$. Assume that $H_0,\dots,H_j$ have been constructed. Then, using the standard Mazur technique, we can find a finite co-dimensional subspace $Z$ of $Y$ so that
$$\forall y\in H_0+\dots+H_j,\ \forall z\in Z,\ \|y\|\le (1+\gamma_j)\|y+z\|.$$
By Lemma \ref{finitecodim}, we can now pick a subspace $H_{j+1}$ of $Z$ such that  $d_{BM}(H_{j+1},\ell_p^{d_{j+1}})$. It is now classical that the sequence $(H_j)_{j=0}^\infty$ satisfies the desired properties.
\end{proof}

We can now state our main result.
\begin{thm}\label{Lp0} Let $p\in [1,+\infty]$, and $Y$ be a Banach space containing uniformly the $\ell_p^n$'s. Let $M$ be a proper subset of $L_p$. For every $r<\frac{1}{16}$ and $\mu\colon (0,+\infty)\to (-\infty,0)$ a surjective continuous non-decreasing function, there exists a map $f\colon M\to Y$ such that for every $x,y\in M$, $$2^{\mu(\|x-y\|_p)}r\|x-y\|_p\le\| f(x)-f(y)\|_Y\le 9\|x-y\|_p.$$
\end{thm}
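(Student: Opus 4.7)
The strategy is to construct $f$ as a superposition of locally near-isometric embeddings of annular pieces of $M$, placed into separate components of the Schauder finite-dimensional decomposition of $Y$ supplied by Lemma~\ref{fdd}. The novelty (compared with \cite{Baudier2012}) will be the \emph{decorrelation} of the annular widths from the net separations, the latter being calibrated against $\mu$. After translating so that $0\in M$, I would fix an overlap parameter $m$ and form the annuli $A_k:=\{x\in M\colon 2^k\le\|x\|_p\le 2^{k+m}\}$; by properness of $M$ these are compact subsets of $L_p$.

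For each $k$ I would select a finite $\eta_k$-net $N_k$ of $A_k$, where the separations $(\eta_k)$ are chosen, on the basis of $\mu$ (and not on the basis of the widths of $A_k$), so that for every distance $t$ realised inside $A_k$ one has $\eta_k\le c\,(1-2^{\mu(t)})t$ for a small absolute constant $c$. The subspace $F_k:=\cspn(N_k-N_k)$ of $L_p$ is finite-dimensional, hence (for every $p\in[1,\infty]$) $(1+\eps)$-isomorphic to a subspace of $\ell_p^{d_k}$ for some $d_k\in\bn$. Combined with Lemma~\ref{fdd} applied to $(d_k)$ and to suitably small $\eps,\gamma$, this yields subspaces $H_k\subset Y$ that form a Schauder FDD with projection constant at most $1+\gamma$, together with near-isometries $T_k\colon F_k\to H_k$ of distortion at most $1+\delta$.

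The embedding itself would be
$$f(x):=\sum_k T_k\bigl(n_k(x)-n_k(x^\star_k)\bigr),$$
where $n_k(x)$ is a closest point of $N_k$ to $x$ when $x\in A_k$ and $n_k(x)=n_k(x^\star_k)$ otherwise; here $x^\star_k\in A_k$ is a fixed anchor chosen so that the $k$-th summand vanishes off $A_k$ and is consistent on the overlaps of consecutive annuli. Only finitely many $k$ contribute to each $x$, so the sum is a genuine finite sum in the FDD. For the upper estimate I would combine: (i) each $T_k$ being $(1+\delta)$-Lipschitz; (ii) the fact that any pair $x,y\in M$ is simultaneously present in only $O(1)$ annuli (because $m$ is fixed); (iii) the Schauder FDD $(H_k)$ respecting the appropriate $\ell_p$-sum up to the projection constant $1+\gamma$. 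With $r<1/16$, the overlap constant together with $\delta,\gamma$ chosen small enough will yield the Lipschitz bound $\|f(x)-f(y)\|_Y\le 9\|x-y\|_p$.

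For the compression bound with $t:=\|x-y\|_p$, the central step is to isolate a single scale $k^\star$ at which both points genuinely lie in $A_{k^\star}$ and $\eta_{k^\star}$ is matched to $\mu(t)$ in the sense above. Projecting via the FDD onto $H_{k^\star}$ then gives
$$\|f(x)-f(y)\|_Y\ge\tfrac{1}{1+\gamma}\bignorm{T_{k^\star}\bigl(n_{k^\star}(x)-n_{k^\star}(y)\bigr)}_Y\ge\tfrac{1}{(1+\gamma)(1+\delta)}\bigl(t-2\eta_{k^\star}\bigr)\ge 2^{\mu(t)}r t.$$
The main obstacle will be precisely the coordination in this last display: for every $t>0$ a scale $k^\star$ must be produced meeting both the containment and the resolution requirements, while the uniform upper estimate is simultaneously maintained across the entire sum. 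The surjectivity and monotonicity of $\mu$ granted by Lemma~\ref{reduction} is exactly what provides enough flexibility to arrange the widths and the separations to accomplish this.
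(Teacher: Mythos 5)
Your overall architecture (dyadic radial decomposition, Lemma~\ref{fdd} to receive finite-dimensional pieces of $L_p$ inside an FDD of $Y$, recovery of the lower bound by projecting onto one block) matches the paper's, but there are two genuine gaps, the first of which is fatal to the stated compression bound.

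First, you attach a \emph{single} resolution parameter $\eta_k$ to each annulus $A_k$, and you propose to prove the lower bound by ``isolating a scale $k^\star$ at which $\eta_{k^\star}$ is matched to $\mu(t)$''. But in your construction the index $k^\star$ is dictated by the radial positions $\|x\|_p,\|y\|_p$, not by $t=\|x-y\|_p$; two points of a fixed annulus $A_k$ can be at any distance $t\in(0,\mathrm{diam}(A_k)]$. Your calibration condition $\eta_k\le c\,(1-2^{\mu(t)})t$ must therefore hold for \emph{every} $t$ realised inside $A_k$, and since $1-2^{\mu(t)}\to 1$ as $t\to 0$ while $A_k$ is in general an infinite compact set, this forces $\eta_k=0$: no finite net can resolve all scales at once. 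The missing idea is to decouple the radial index from the resolution index: for each shell $B_k$ the paper takes a whole sequence of approximating maps $\varphi_k^n$, one for each resolution $\sigma(-n)/\eta$ where $\sigma$ is the generalized inverse of $\mu$, and superposes them with geometric weights, $f_k=\sum_{n}2^{-n}f_k^n$. Given $x,y$ one then chooses $n$ with $\sigma(-(n+1))\le\|x-y\|_p<\sigma(-n)$ and projects onto the corresponding blocks; the weight $2^{-(n+1)}$ lost in this projection is at least $2^{\mu(\|x-y\|_p)}/2$, which is exactly where the factor $2^{\mu}$ in the statement comes from. Without this two-index superposition the theorem's lower bound cannot be obtained.

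Second, your local maps are nearest-point maps $n_k$ to finite nets. These are not Lipschitz: for $x,y$ at distance much smaller than the mesh that round to different net points one only gets $\|n_k(x)-n_k(y)\|\le\|x-y\|+2\eta_k$, an additive rather than multiplicative estimate, so the claimed bound $\|f(x)-f(y)\|_Y\le 9\|x-y\|_p$ fails for close pairs. This is precisely why the paper replaces the net/Fr\'echet-type local embeddings of \cite{Baudier2012} by the \emph{linear} $1$-Lipschitz maps furnished by the metric approximation property of $L_p$ on the compact sets $B_k$; linearity also makes the blockwise computation of $Q_k^n(f(x))$ in the compression estimate possible.
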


The following corollary is now a simple consequence of Theorem \ref{Lp0}, Lemma \ref{reduction}, and Definition \ref{almostLip}.
\begin{cor}\label{Lp} Let $p\in [1,+\infty]$, $M$ be a proper subset of $L_p$, and $Y$ be a Banach space containing uniformly the $\ell_p^n$'s. Then $M$ is almost Lipschitz embeddable into $Y$.
\end{cor}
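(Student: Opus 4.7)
The plan is to follow the annuli-based gluing framework from \cite{Baudier2012}, improved in two ways: the annulus widths will be decorrelated from the net separation parameters (which will encode $\mu$), and the local embeddings will come from the $(1+\eta)$-bounded approximation property of $L_p$ rather than from a Fr\'echet-type construction. Fix a basepoint $x_0\in M$ and, for $j\in\bz$, set $A_j:=\{x\in M\colon 2^j\le\|x-x_0\|_p<2^{j+1}\}$; each $A_j$ is compact since $M$ is proper. Choose a finite $\eps_j$-net $N_j$ of $A_j$ with $\eps_j$ calibrated to $\mu$ on the scale $2^j$, roughly $\eps_j\asymp 2^j\cdot 2^{\mu(2^j)}$; the continuity, monotonicity, and surjectivity of $\mu$ onto $(-\infty,0)$ make this well-defined and ensure $\eps_j\to 0$ as $j\to -\infty$.

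For each $j$, the finite set $N_j\subset L_p$ lies in a finite-dimensional subspace of $L_p$ which, by the metric approximation property of $L_p$, is $(1+\eta)$-isomorphic to some $\ell_p^{d_j}$. Applying Lemma \ref{fdd} to $Y$ with small parameters $\eta,\gamma>0$, I obtain subspaces $(H_j)_{j\in\bz}$ of $Y$ with $d_{BM}(H_j,\ell_p^{d_j})\le 1+\eta$ forming a Schauder FDD of its closed linear span with projection constants at most $1+\gamma$. Composing yields, for each $j$, a $(1+\eta)^2$-bi-Lipschitz map $\psi_j:N_j\to H_j$.

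To build the global map, for $x\in M$ let $\pi_j(x)$ denote a nearest point of $x$ in $N_j$, and define
$$f(x):=\sum_{j\in\bz}\alpha_j(x)\,\psi_j(\pi_j(x)),$$
where $(\alpha_j(x))_{j\in\bz}$ is a finitely supported Lipschitz partition-of-unity-type family concentrated on the two or three annuli closest to $\|x-x_0\|_p$. This ensures the sum is actually finite at each point and that $f$ depends continuously on $x$; the relative scalings are chosen so that contributions from distant annuli become negligible compared with the contribution from the annulus containing $x$, while contributions from adjacent annuli telescope.

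The hard part, which is where the numerology $r<\tfrac{1}{16}$ originates, is establishing the two-sided distance estimate. The upper bound $\|f(x)-f(y)\|_Y\le 9\|x-y\|_p$ is obtained by combining the $(1+\eta)$-Lipschitz behaviour of each $\psi_j$, the doubling of distances under nearest-point projection onto $N_j$, the Lipschitz constants of the $\alpha_j$, the $(1+\gamma)$ FDD projection bound, and telescoping across adjacent annuli. The lower bound splits by scales: when $\|x-y\|_p$ is comparable to the width $2^j$ of the annulus containing $x$, projecting $f(x)-f(y)$ onto $H_j$ through the FDD and applying the near-isometry $\psi_j$ produces a term of order $\|x-y\|_p$, and the compression factor $2^{\mu(\|x-y\|_p)}$ is precisely what absorbs the loss caused by collapsing pairs of points closer than $\eps_j$ to a single nearest neighbour, by virtue of our choice $\eps_j\asymp 2^j\cdot 2^{\mu(2^j)}$. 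When $x,y$ lie in distant annuli, $\|x-y\|_p$ is comparable to $\max(\|x-x_0\|_p,\|y-x_0\|_p)$, and projecting onto the $H_j$ of the larger index extracts a term of order $2^j$, yielding a bound at least as strong as $r\|x-y\|_p$. Gathering all geometric constants from these steps and choosing $\eta,\gamma$ small enough yields the stated inequalities for any $r<\tfrac{1}{16}$.
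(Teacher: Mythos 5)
There is a genuine gap at the heart of your construction: a \emph{single} finite $\eps_j$-net per annulus cannot produce a map whose compression modulus is positive at every scale, which is what almost Lipschitz embeddability demands ($\rho_{f_\varphi}(t)\ge rt\varphi(t)>0$ for \emph{all} $t>0$). If $x\neq y$ lie in the same annulus $A_j$ with $\|x-y\|_p<\eps_j$ and $\|x-x_0\|_p=\|y-x_0\|_p$ (such pairs exist whenever $A_j$ is infinite), then $\pi_j(x)=\pi_j(y)$ for the relevant indices and the partition-of-unity weights agree, so $f(x)=f(y)$ and the required lower bound $r\,2^{\mu(\|x-y\|_p)}\|x-y\|_p>0$ fails outright. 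The factor $2^{\mu(t)}$ is a multiplicative degradation and cannot ``absorb'' an exact collapse; your calibration $\eps_j\asymp 2^j\cdot 2^{\mu(2^j)}$ ties the resolution to the single scale $2^j$ of the annulus, whereas the estimate must hold at every scale $t\in(0,\infty)$ simultaneously. What you have built is, at best, a range embedding with range bounded below by the net mesh, not an almost Lipschitz embedding.

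The paper's proof of Theorem \ref{Lp0} (from which Corollary \ref{Lp} follows via Lemma \ref{reduction}) avoids this by attaching to each dyadic index $k$ an entire sequence of local approximations rather than one: using the metric approximation property of $L_p$ it produces, for every $n\in\bn$, a linear $1$-Lipschitz map $\varphi_k^n$ into a finite-dimensional subspace with $\|\varphi_k^n(x)-x\|_p\le\sigma(-n)/\eta$ on the compact ball $B_k$ (where $\sigma$ is the generalized inverse of $\mu$), and then sets $f_k=\sum_{n=1}^\infty 2^{-n}f_k^n$. For a pair at distance $t$ with $\sigma(-(n+1))\le t<\sigma(-n)$, the FDD projection onto the $(k,n+1)$ block recovers a term of order $2^{-(n+1)}(1-2/\eta)t\gtrsim 2^{\mu(t)}t$; this is precisely the mechanism by which the factor $2^{\mu(t)}$ arises, and it is the step your single-net construction has no substitute for. (A hierarchy of nets with mesh tending to $0$ in each annulus, summed with the weights $2^{-n}$, could in principle be made to work, but that is a substantially different construction from the one you describe, and you would still need to control the Lipschitz constant of the resulting infinite sum.) A minor additional point: placing the span of a finite subset of $L_p$ inside a $(1+\eta)$-copy of $\ell_p^{d}$ uses the $\mathcal L_{p}$-space structure of $L_p$, not the metric approximation property; the paper uses the MAP for the different purpose just described.
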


Before proving Theorem \ref{Lp0} we derive two interesting corollaries. Since $L_\infty$ is isometrically universal for all separable metric spaces. The case $p=\infty$ yields the following improvement of a result of the first named author \cite{Baudier2012}.

\begin{cor}\label{nocotype}
Let $(M,d_M)$ be a proper metric space and $Y$ be a Banach space without cotype,
then $M$ is almost Lipschitz embeddable into $Y$.
\end{cor}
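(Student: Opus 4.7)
The plan is to derive this corollary directly from Corollary \ref{Lp} applied at $p=\infty$, with only two routine observations bridging the gap. First, any proper metric space is locally compact and $\sigma$-compact, hence separable. Second, by the classical Fr\'echet (or Kuratowski) embedding, every separable metric space embeds isometrically into $\ell_\infty$, and thus into $L_\infty$. Fixing an isometry $\iota\colon M\to L_\infty$, the image $\iota(M)$ is a \emph{proper} subset of $L_\infty$: the closed balls of $\iota(M)$ in the metric induced from $L_\infty$ correspond via $\iota$ to the closed balls of $M$, which are compact by hypothesis.

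Next I would invoke the Maurey--Pisier theorem (Theorem \ref{JKMP}): the assumption that $Y$ has no (non-trivial) cotype means precisely $q(Y)=\infty$, and hence $Y$ contains the $\ell_\infty^n$'s uniformly. Consequently, Corollary \ref{Lp} applied to the proper subset $\iota(M)$ of $L_\infty$ with $p=\infty$ furnishes a family $(g_\varphi)_{\varphi\in\Phi}$ of maps $\iota(M)\to Y$ witnessing the almost Lipschitz embeddability of $\iota(M)$ into $Y$, with some scaling factor $r$ and distortion constant $D$.

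Finally, I would observe that almost Lipschitz embeddability is trivially preserved under isometric pre-composition: setting $f_\varphi:=g_\varphi\circ\iota$ and using $\|\iota(x)-\iota(y)\|_\infty=d_M(x,y)$, the compression and expansion estimates of $g_\varphi$ transfer verbatim to $f_\varphi$ with the same constants $r$ and $D$. This produces the required family of maps $M\to Y$ and concludes the proof.

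There is essentially no serious obstacle here; the work has already been done in Corollary \ref{Lp}. The only point worth pausing on is the verification that isometric universality of $L_\infty$ for separable spaces actually yields an image that is proper in $L_\infty$, and this is immediate from the fact that properness is an intrinsic metric property preserved by isometries.
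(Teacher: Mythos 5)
Your argument is correct and is exactly the paper's route: the authors derive Corollary \ref{nocotype} from Corollary \ref{Lp} with $p=\infty$ by invoking the isometric universality of $L_\infty$ for separable metric spaces (together with the Maurey--Pisier characterization of trivial cotype), and you have merely filled in the routine details of that derivation. Nothing further is needed.
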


We can also use Dvoretsky's theorem to deduce the following from the case $p=2$.

\begin{cor}\label{hilbert}
Let $M$ be a proper subset of a Hilbert space $H$ and $Y$ be an infinite-dimensional Banach space, then $M$ is almost Lipschitz embeddable into $Y$.
\end{cor}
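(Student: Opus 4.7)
The plan is to reduce this to Corollary \ref{Lp} with $p=2$ via two standard facts. First, I would observe that since $M$ is proper it is separable, so the closed linear span of $M$ (after translating to contain $0$) is a separable closed subspace of $H$, hence isometric to a closed subspace of $\ell_2$. Since $\ell_2$ embeds isometrically into $L_2[0,1]$ (for instance, via a sequence of independent standard Gaussians, which form an isometric copy of $\ell_2$ inside $L_2$), one may identify $M$ with a proper subset of $L_2$ without changing its metric.

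Next I would invoke Dvoretzky's theorem, which asserts that every infinite-dimensional Banach space $Y$ contains $(1+\varepsilon)$-uniformly isomorphic copies of $\ell_2^n$ for every $n\in\bn$ and every $\varepsilon>0$. In particular, $Y$ contains the $\ell_2^n$'s uniformly in the sense used earlier in the paper. At this point the hypotheses of Corollary \ref{Lp} are satisfied with $p=2$: $M$ is a proper subset of $L_2$ and $Y$ contains the $\ell_2^n$'s uniformly. Applying that corollary directly yields an almost Lipschitz embedding of $M$ into $Y$.

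There is essentially no obstacle here; the only point worth being careful about is the initial identification of $M$ as a subset of $L_2$, which hinges on the (standard) fact that every Hilbert space is isometrically linearly isomorphic to some $\ell_2(\Gamma)$ and that $\ell_2$ embeds isometrically into $L_2$. Once this is in place, the conclusion is immediate from the $p=2$ instance of Corollary \ref{Lp} and requires no further construction.
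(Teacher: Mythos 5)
Your proposal is correct and follows exactly the paper's route: the paper likewise deduces Corollary \ref{hilbert} from the $p=2$ case of Corollary \ref{Lp} via Dvoretzky's theorem, and your preliminary step (identifying the proper set $M$, up to isometry, with a proper subset of $L_2$ through the separable closed span and the isometric copy of $\ell_2$ inside $L_2$) is just the standard detail the authors leave implicit. Nothing is missing.
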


The remainder of this section will be devoted to the proof of Theorem \ref{Lp0}. Recall that a Banach space $X$ has the $\lambda$-bounded approximation property ($\lambda$-BAP in short), if for any $\varepsilon\in(0,\infty)$, and any compact subset $K\subset X$, there exists a finite-dimensional subspace $G$ of $X$ and a linear map $\varphi \colon X \to G$ such that $\varphi$ is $\lambda$-Lipschitz, and $\|\varphi(x)-x\|_X\le \varepsilon$ for all $x\in K$.

\begin{proof}[Proof of Theorem \ref{Lp0}]

Let $\sigma$ denote the generalized inverse of the map $\mu$, i.e. $\sigma(y)=\inf\{x\in (0,\infty)\colon \mu(x)\ge y\}$ with the convention that $\inf\emptyset=\infty$. The following properties of $\sigma$ are crucial in the sequel (we refer to \cite{EmbrechtsHofert2013} for more information on generalized inverses):
\begin{enumerate}[(a)]
\item $\sigma\colon (-\infty,0)\to (0,+\infty)$ is non-decreasing and $\ds\lim_{t\to-\infty}\sigma(t)=0$,
\item $\mu(s)\ge t$ implies that $s\ge\sigma(t)$,
\item since $\mu$ is continuous, $\sigma(t)\le s$ implies that $t\le \mu(s)$
\end{enumerate}
In particular, it follows from $(b)$ and $(c)$ that for all $t<t'$, $\sigma(t)\le s<\sigma(t')$ is equivalent to $t\le  \mu(s)<t'$.

Fix now some parameters $\beta,\varepsilon,\gamma>0$, and $\eta>2$ to be chosen later. Since $M$ is a proper subset of $L_p$, for every $k\in\bz$ the set $B_k:=\{x\in M\ \colon\ \|x\|_p\le 2^{k+1}\}$ is a compact of $L_p$. It is well-known (cf. \cite{Handbookvol1} Chapter 7) that $L_p$ has the metric approximation property, i.e. the $1$-BAP. Therefore, for any $n\in\bn$, there exists a finite-dimensional subspace $G_{k,n}$ of $L_p$ and a linear map $\varphi_k^n\colon L_p \to G_{k,n}$ such that $\varphi_k^n$ is $1$-Lipschitz and
\begin{equation}\label{key}
\forall x\in B_k,\ \ \|\varphi_k^n(x)-x\|_p\le \frac{\sigma(-n)}{\eta}.
\end{equation}

Since $L_p$ is a $\mathcal L_{(1+\beta),p}$-space, there exists a linear embedding $R_{k,n}$ from $G_{k,n}$ onto  a subspace of some $\ell_p^{d(k,n)}$ (where $d(k,n)\in\bn$), and such that $\|R_{k,n}\|\le 1$ and $\|R_{k,n}^{-1}\|\le1+\beta$. We shall now use the fact that $Y$ contains uniformly the $\ell_p^n$'s. So, let $\psi\colon\bz\times\bn\to\bn$ be a bijection. It now follows from Lemma \ref{fdd} that we can build finite-dimensional subspaces
$(H_j)_{j=0}^\infty$ of $Y$ and $(S_j)_{j=0}^\infty$ so that, for every $j\geq
0$, $S_j$ is a linear map from $\ell_p^{d(\psi^{-1}(j))}$ onto $H_j$ satisfying
$$\forall x\in \ell_p^{d(\psi^{-1}(j))},\ \ \ \frac{1}{1+\varepsilon}\|x\|\leq \|S_jx\|_Y\leq \|x\|,$$ and also such that $(H_j)_{j=0}^\infty$ is a Schauder finite-dimensional decomposition of its closed linear span $H$. Let $P_j$ be the projection from $H$ onto $H_0\oplus...\oplus H_j$ with kernel $\overline{\rm
Span}\,(\bigcup_{i=j+1}^\infty H_i)$. Lemma \ref{fdd} also insures that this construction can be done so that $\|P_j\|\leq (1+\gamma)$, for all $j\ge 0$. We denote $\Pi_0=P_0$ and $\Pi_j=P_j-P_{j-1}$ for $j\geq 1$. Clearly $\|\Pi_j\|\leq 2(1+\gamma)$. Finally, for $(k,n)\in \bz\times\bn$, let $T_{k,n}:=S_{\psi(k,n)}\circ R_{k,n}\colon G_{k,n}\to H_{\psi(k,n)}$, which is an isomorphism from $G_{k,n}$ onto a subspace $F_{\psi(k,n)}$ of $H_{\psi(k,n)}$. Note that  $(F_j)_{j=0}^\infty$ is a Schauder decomposition of its closed linear span $Z$. Denote now  $Q_k^n=T_{k,n}^{-1}\circ\widehat{\Pi}_{\psi(k,n)}\colon Z\to G_{k,n}$, where $\widehat{\Pi}_{j}$ is the restriction of $\Pi_j$ to $Z$. Then set $f_k^n=T_{k,n}\circ \varphi_k^n$ which is defined on $L_p$, and in particular on $B_k$ and takes values in $Y$, more precisely in $F_{\psi(k,n)}= \widehat{\Pi}_{\psi(k,n)}(Z)$. Then we have  that for all $x,y \in B_k$,
$$\frac{1}{(1+\varepsilon)(1+\beta)}\big(\|x-y\|_p-\frac{2\sigma(-n)}{\eta}\big)\le \|f_k^n(x)-f_k^n(y)\|_Y\le \|x-y\|_p.$$
Define
$$\begin{array}{rcl}
     f_k\ :\ B_k  & \rightarrow &\displaystyle \sum_{n\in\bn} \widehat{\Pi}_{\psi(k,n)}(Z)\\
       &   &  \\
     x & \mapsto &\displaystyle \sum_{n=1}^\infty2^{-n} f_k^n(x).\\
    \end{array}$$
It is clear that $f_k$ is $1$-Lipschitz.\\
Define
$$\begin{array}{rcl}

    f\ :\ M & \rightarrow & Y \\
            &   &  \\
          x & \mapsto & \la_x f_{k}(x)+(1-\la_x)f_{k+1}(x)\ ,\ {\rm if}\ 2^{k}\le\|x\|_p\le 2^{k+1}\textrm{ for some }k\in\bz,
    \end{array}$$
    where $$\la_x:=\frac{2^{k+1}-\|x\|_p}{2^{k}}.$$

The rest of the proof will be divided into two parts. In the first part it is proven that $f$ is a Lipschitz map, and the proof only requires the fact that $\varphi_k^n$, $f_k^n$ and $f_k$ are Lipschitz maps, the definition of $f$ in terms of the dyadic slicing, and the triangle inequality. In the second part an estimate of the compression function of $f$ that relies essentially on the inequality \eqref{key} is given.

\medskip

\textbf{Part I: $f$ is a Lipschitz map}

\medskip

\noindent  Let $x,y \in M$ and assume, as we may, that $\|x\|_p\leq \|y\|_p$. Various cases have to be considered. Note that $f(0)=0$ since the $f_k^n$'s are linear.

\noindent \underline{Case 1.}  If $\|x\|_p\leq \frac{1}{2}\|y\|_p$, then
$$\|f(x)-f(y)\|_Y\leq \|x\|_p+\|y\|_p\leq \frac{3}{2}\|y\|_p\leq 3(\|y\|_p-\|x\|_p)\leq
3\|x-y\|_p.$$

\noindent \underline{Case 2.} If $\frac{1}{2}\|y\|_p<\|x\|_p\leq \|y\|_p$, consider two
subcases.

\smallskip

\noindent \underline{Case 2.a.} $2^k\leq \|x\|_p\leq \|y\|_p<2^{k+1}$, for some $k\in\bz$. Then,
let
$$\la_x=\frac{2^{k+1}-\|x\|_p}{2^{k}}\ \ {\rm and}\ \ \la_y=\frac{2^{k+1}-\|y\|_p}{2^{k}}.$$
\indent We have,
$$|\la_x-\la_y|=\frac{\|y\|_p-\|x\|_p}{2^{k}}\leq \frac{\|x-y\|_p}{2^{k}}$$
\indent therefore,

\begin{align*}
\|f(x)-f(y)\|_Y=  & \|\la_x f_k(x)-\la_y f_k(y)+(1-\la_x)f_{k+1}(x)-(1-\la_y)f_{k+1}(y)\|_Y
\end{align*}
\begin{align*}
                     \le & \la_x\| f_k(x)-f_k(y)\|_Y +(1-\la_x)\| f_{k+1}(x)-f_{k+1}(y)\|_Y+2|\la_x-\la_y|\|y\|_p\\
                      \le & \|x-y\|_p+2^{k+2}|\la_x-\la_y|\\
                      \le & 5\|x-y\|_p.
\end{align*}

\noindent \underline{Case 2.b.} $2^k\leq \|x\|_p<2^{k+1}\leq \|y\|_p<2^{k+2}$, for some $k\in\bz$. Then,
let
$$\la_x=\frac{2^{k+1}-\|x\|_p}{2^{k}}\ \ {\rm and}\ \ \la_y=\frac{2^{k+2}-\|y\|_p}{2^{k+1}}.$$
\indent We have,
$$\la_x \leq \frac{\|x-y\|_p}{2^k},\ \ {\rm so}\ \ \la_x\|x\|_p\leq 2\,\|x-y\|_p.$$
\indent Similarily,
$$1-\la_y=\frac{\|y\|_p-2^{k+1}}{2^{k+1}}\leq \frac{\|x-y\|_p}{2^{k+1}}\ \ {\rm
and}\ \ (1-\la_y)\|y\|_p\leq 2\,\|x-y\|_p.$$

It follows that,
\begin{align*}
\|f(x)-f(y)\|_Y&=\|\la_x f_k(x)+(1-\la_x)f_{k+1}(x)-\la_y f_{k+1}(y)-(1-\la_y)f_{k+2}(y)\|_Y
\end{align*}
\begin{align*}
              &\le \la_x(\|f_k(x)\|_Y+\|f_{k+1}(x)\|_Y)+(1-\la_y)(\|f_{k+1}(y)\|_Y+\|f_{k+2}(y)\|_Y)\\
              & \hskip 8cm+\|f_{k+1}(x)-f_{k+1}(y)\|_Y\\
              &\le2\la_x\|x\|_p+2(1-\la_y)\|y\|_p+\|x-y\|_p\\
              &\le9\|x-y\|_p.
\end{align*}

\smallskip\noindent We have shown that $f$ is $9$-Lipschitz.

\smallskip

\textbf{Part II: Estimation of the compression modulus of $f$.}

\medskip

\noindent Estimating from below the compression modulus of $f$ requires the investigation of three different cases, based on the location of the pair of points in $M$ that we are considering. The three situations are as follows: the two points are in the same dyadic annulus, or in two consecutive dyadic annuli, or eventually separated by at least an entire dyadic annulus. Let $x,y\in M$. In the following discussion we will assume that $\|x\|_p\leq \|y\|_p$, $2^k\le \|x\|_p<2^{k+1}$, and $2^{\ell}\le \|y\|_p<2^{\ell+1}$, for some $k,\ell\in\bz$.
Recall that $Q_k^n=T_{k,n}^{-1}\circ\widehat{\Pi}_{\psi(k,n)}$, and hence $\|Q_k^n\|\le\|T_{k,n}^{-1}\|\times\|\widehat{\Pi}_{\psi(k,n)}\|
\le\|R_{k,n}^{-1}\|\times\|S_{\psi(k,n)}^{-1}\|\times\|\Pi_{\psi(k,n)}\|\le (1+\beta)\times(1+\varepsilon)\times 2(1+\gamma)$.

\medskip
Assume first that $\|x-y\|_p<\sigma(-1)$ and pick $n\in\bn$ such that $\sigma(-(n+1))\le \|x-y\|_p<\sigma(-n)$ or equivalently $-(n+1)\le \mu(\|x-y\|_p)< -n$. Then
$$Q_{k}^{n+1}(f(x))=2^{-(n+1)}\la_x\varphi_k^{n+1}(x),\ Q_{k+1}^{n+1}(f(x))=2^{-(n+1)}(1-\la_x)\varphi_{k+1}^{n+1}(x),$$
$$Q_{\ell}^{n+1}(f(x))=2^{-(n+1)}\la_x\varphi_{\ell}^{n+1}(x),\ Q_{\ell+1}^{n+1}(f(x))=2^{-(n+1)}(1-\la_x)\varphi_{\ell+1}^{n+1}(x),$$

and
$$Q_r^{n+1}(f(x))=Q_s^{n+1}(f(y))=0\ \textrm{for}\ r\notin\{k,k+1\},\ s\notin\{\ell,\ell+1\}.$$
So in each of the three cases described in our foreword for the proof of Part II, we can write
\begin{align*}
(Q_{r_1}^{n+1}+\cdots+Q_{r_s}^{n+1})(f(x)-f(y))=&2^{-(n+1)}[\la_x\varphi_k^{n+1}(x)+(1-\la_x)\varphi_{k+1}^{n+1}(x)\\
                                                              &-\la_y\varphi_{\ell}^{n+1}(y)-(1-\la_y)\varphi_{\ell+1}^{n+1}(y)]
\end{align*}
with $s\in \{2,3,4\}$ and $r_1,\dots,r_s \in \{k,k+1,\ell,\ell+1\}$.\\
We now use that
$$\max_{r\in\{k,k+1,\ell,\ell+1\}} \|\varphi_r^{n+1}(x)-x\|_p\le \frac{\sigma(-(n+1))}{\eta}$$
to get
\begin{align*}
\|(Q_{r_1}^{n+1}+\dots+Q_{r_s}^{n+1})(f(x)-f(y))\|_Y &\ge 2^{-(n+1)}\Big(\|x-y\|_p-\frac{2\sigma(-(n+1))}{\eta}\Big)\\
&\ge 2^{-(n+1)}\frac{\eta-2}{\eta} \|x-y\|_p.
\end{align*}
Since $\|Q_{r_1}^{n+1}+\cdots+Q_{r_s}^{n+1}\|\le 8(1+\gamma)(1+\varepsilon)(1+\beta)$, we have
$$\|f(x)-f(y)\|_Y\ge \frac{2^{-(n+1)}(\eta-2)\|x-y\|_p}{8\eta(1+\gamma)(1+\varepsilon)(1+\beta)}
\ge\frac{2^{\mu(\|x-y\|_p)}(\eta-2)\|x-y\|_p}{16\eta(1+\gamma)(1+\varepsilon)(1+\beta)}.$$

\medskip
Assume now that $\|x-y\|_p\ge \sigma(-1)$ or equivalently $-1\le \mu(\|x-y\|_p)$. Then using the maps $Q_k^{-1}$, $Q_{k+1}^{-1}$, $Q_{\ell}^{-1}$, and $Q_{\ell+1}^{-1}$ in place of $Q_k^{n+1}$, $Q_{k+1}^{n+1}$, $Q_{\ell}^{n+1}$, and $Q_{\ell+1}^{n+1}$, we obtain that
$$\|f(x)-f(y)\|_Y\ge\frac{(\eta-2)\|x-y\|_p}{16\eta(1+\gamma)(1+\varepsilon)(1+\beta)}.$$
Since $\mu\le 0$, it follows again that
$$\|f(x)-f(y)\|_Y\ge\frac{2^{\mu(\|x-y\|_p)}(\eta-2)\|x-y\|_p}{16\eta(1+\gamma)(1+\varepsilon)(1+\beta)}.$$
Since $\beta, \varepsilon, \gamma$ can be taken as small as wanted, and $\eta$ as large as needed, this finishes the proof of Theorem \ref{Lp0}.
\end{proof}

\section{Nearly isometric embeddability of stable metric spaces}\label{stablesection}
In this section we shall deal with a class of spaces strictly containing the class of proper metric spaces, namely the class of stable metric spaces. The notion of stability was introduced, originally for (separable) Banach spaces, in the work of Krivine and Maurey  \cite{KrivineMaurey1981}, in an attempt to exhibit a class of Banach spaces with a very regular linear structure. It seems that its natural extension to general metric spaces was first studied by Garling \cite{Garling1982}. A metric space $(X,d_X)$ is said to be stable if for any two bounded sequences $\xn$, $\yn$, and any two non-principal ultrafilters $\cU,\cV$ on $\bn$, the equality $\lim_{m,\cU}\lim_{n,\cV}d_{X}(x_m,y_n)=\lim_{n,\cV}\lim_{m,\cU}d_{X}(x_m,y_n),$ holds. Any $L_p$-space is actually stable for $p\in[1,\infty)$.

\medskip

In the remarkable article \cite{Kalton2007}, Kalton showed that for every $s\in(0,1)$, every stable metric space $M$ admits a strong embedding $f$ into some reflexive space, that depends heavily on $M$, such that $\omega_f(t)\le \max\{t,t^s\}$ and $\rho_f(t)\ge\min\{t,t^s\}$. There are actually good reasons why this embedding fails short to be an isometric  embedding. Indeed, a classical differentiability argument will tell you that the stable space $L_1$ does not even admit a bi-Lipschitz embedding into a Banach space with the Radon-Nikod\'ym property, and in particular into a reflexive space. For the same reason, $\ell_1$ does not admit a bi-Lipschitz embedding into a reflexive Banach space. Also note that Kalton's embeddings are not range embeddings. In this section we show that a slight modification of Kalton's construction gives embeddings with better properties. In particular we can produce range isometric embeddings, i.e. the constant $D$ and the scaling factor $r$ are equal to $1$ in the definition of a range bi-Lipschitz embedding. The sharpened embeddability result is better grasped after introducing the new notion of \textit{nearly isometric embeddability}.

\begin{defn}\label{nearlyisometric}
Let
$$\begin{array}{ll}
\cP:=\{\rho\colon\br^+\to\br^+|\ \rho \textrm{ is continuous, } \rho(t)=t \textrm{ for all }t\in [0,1],&\hskip -.3cm \rho(t)\le t \textrm{ for all }t\ge 1,\\
 & \ds\lim_{t\to +\infty}\frac{\rho(t)}{t}=0\},
\end{array}$$
and
$$\begin{array}{cl}
\Omega:=\{\omega\colon\br^+\to\br^+\ |\ \omega \textrm{ is continuous, }&\omega(0)=0,\ t\le \omega(t) \textrm{ for all }t\in [0,1],\\
& \ds\lim_{t\to 0}\frac{\omega(t)}{t}=\infty,\ \omega(t)=t\textrm{ for all }t\ge 1\}.
\end{array}$$
We say that $(X,d_X)$ nearly isometrically embeds into $(Y,d_Y)$ if there exist a family $(f_{\rho,\omega})_{(\rho,\omega)\in(\cP,\Omega)}$ of maps from $X$ into $Y$ such that $\rho(t)\le\rho_{f_{\rho,\omega}}(t)$ and $\omega_{f_{\rho,\omega}}(t)\le\omega(t)$. In other words, $(X,d_X)$ nearly isometrically embeds into $Y$ if for any pair of continuous functions $\rho, \omega:[0,+\infty)\to [0,+\infty)$ satisfying
\begin{enumerate}[(i)]
\item $t\le \omega(t)$ for $t\in [0,1]$ and $\omega(t)=t$ for $t\in[1,\infty)$,
\item $\omega(0)=0$ and $\lim_{t\to 0}\frac{\omega(t)}{t}=+\infty$,
\item $\rho(t)=t$ for $t\in [0,1]$ and $\rho(t)\le t$ for $t\in[1,\infty)$,
\item $\lim_{t\to +\infty}\frac{\rho(t)}{t}=0$,
\end{enumerate}
there exists a map $f\colon X\to Y$ such that for all $x,y\in X$
$$ \rho(d_X(x,y))\le d_Y(f(x),f(y)) \le \omega(d_X(x,y)).$$
\end{defn}

It is clear from Definition \ref{nearlyisometric} that if $X$ admits an isometric embedding into $Y$, then $X$ nearly isometrically embeds into $Y$.
%Almost Lipschitz embeddability and nearly isometric embeddability seem to be two incomparable notions. On the one hand, nearly isometric embeddability is not even implied by bi-Lipschitz embeddability. On the other hand, nearly isometric embeddability does not even imply bi-Lipschitz embeddability for large distances (resp. small distances) in regards of condition (iv) (resp. (ii)).
The notion of nearly isometric embeddability expresses the fact that one can construct an embedding that is as close as one wants to an isometric embedding. The following observation, whose easy proof is left to the reader, should justify the terminology.

\begin{prop}
If $X$ nearly isometrically embeds into $Y$ then $X$ admits an isometric range embedding with arbitrarily large range, i.e. for every $0<s_1\le s_2<\infty$ there exists $f\colon X\to Y$ such that $d_Y(f(x),f(y))=d_X(x,y)$, for all $x,y\in X$ satisfying $d_X(x,y)\in[s_1,s_2]$.
\end{prop}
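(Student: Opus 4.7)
The plan is to reduce the statement to a direct application of the definition of nearly isometric embeddability by exhibiting, for the given $0<s_1\le s_2<\infty$, a specific pair $(\rho,\omega)\in \cP\times\Omega$ with the property that $\rho(t)=\omega(t)=t$ for every $t\in[s_1,s_2]$. Once such a pair is produced, the corresponding embedding $f=f_{\rho,\omega}$ furnished by near-isometric embeddability will satisfy, for any $x,y\in X$ with $d_X(x,y)\in[s_1,s_2]$,
\[
d_X(x,y)=\rho(d_X(x,y))\le d_Y(f(x),f(y))\le \omega(d_X(x,y))=d_X(x,y),
\]
which is the desired isometric range embedding.

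The heart of the argument is the construction of $\rho$ and $\omega$. For $\rho$, the definition of $\cP$ already forces $\rho(t)=t$ on $[0,1]$, so only the portion $[s_2,\infty)$ (and, if $s_2>1$, the interval $[1,s_2]$) must be addressed. I would set $\rho(t)=t$ on the whole interval $[0,\max(1,s_2)]$, which is compatible with the constraints on $\cP$ (since $\rho(t)\le t$ is automatic on $[1,s_2]$), and then extend $\rho$ on $[\max(1,s_2),\infty)$ by a continuous nonnegative function bounded above by $t$ that tends to $0$ when divided by $t$, e.g.\ $\rho(t)=\max(1,s_2)^2/t$. Dually, for $\omega$, the definition of $\Omega$ already forces $\omega(t)=t$ on $[1,\infty)$, so only the portion $[0,s_1]$ (and, if $s_1<1$, the interval $[s_1,1]$) must be addressed. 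I would set $\omega(t)=t$ on the whole interval $[\min(1,s_1),\infty)$, and on $[0,\min(1,s_1)]$ define $\omega(t)=\sqrt{\min(1,s_1)\cdot t}$, which is continuous with $\omega(0)=0$, satisfies $\omega(t)\ge t$ on that interval (since $t\le \min(1,s_1)$), matches the previous piece continuously at $t=\min(1,s_1)$, and verifies $\omega(t)/t\to\infty$ as $t\to 0$.

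A quick check confirms that $\rho\in\cP$, $\omega\in\Omega$, and $\rho(t)=\omega(t)=t$ for all $t\in[s_1,s_2]$, regardless of whether $s_1$ and $s_2$ lie below, above, or straddle $1$. Invoking the definition of near isometric embeddability with this pair then yields the required map $f$. The only mild technical point is handling the three cases $s_2\le 1$, $s_1\ge 1$, and $s_1<1<s_2$ in the construction, but no real obstacle arises: in each case the two pieces glue continuously at the boundary $\min(1,s_1)$ or $\max(1,s_2)$, and the asymptotic conditions defining $\cP$ and $\Omega$ are met by elementary choices.
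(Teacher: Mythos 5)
Your proof is correct, and it follows the route the authors clearly intend (the paper explicitly leaves this proof to the reader): choose $(\rho,\omega)\in\cP\times\Omega$ that both agree with the identity on $[s_1,s_2]$ and apply the definition of near isometric embeddability to the resulting $f_{\rho,\omega}$. Your explicit choices $\rho(t)=\max(1,s_2)^2/t$ beyond $\max(1,s_2)$ and $\omega(t)=\sqrt{\min(1,s_1)\,t}$ below $\min(1,s_1)$ do verify all the membership conditions for $\cP$ and $\Omega$, so the argument is complete.
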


As for the case of almost Lipschitz embeddability one can work with more regular maps.

\begin{lem}\label{technical} Let $\omega\in\Omega$ and $\rho\in \cP$, then
\begin{enumerate}[(i)]
\item there exists $\rho^*\in \cP$ such that $\rho^*\ge \rho$, $\rho^*$ is non-decreasing and the map $t\mapsto \frac{\rho^*(t)}{t}$ is non-increasing,\\
\item there exists $\omega^*\in\Omega$ such that $\omega^*\le \omega$, $\omega^*$ is non-decreasing and the map $t\mapsto \frac{\omega^*(t)}{t}$ is non-increasing.
\end{enumerate}
\end{lem}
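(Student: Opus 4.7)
The plan is to construct $\rho^*$ and $\omega^*$ as a sup-envelope and an inf-envelope, respectively, of one-parameter families of elementary functions built from $\rho$ and $\omega$. Concretely, set
$$\rho^*(t):=\sup_{s>0}\,\rho(s)\min\{1,t/s\},\qquad \omega^*(t):=\inf_{s>0}\,\omega(s)\max\{1,t/s\}.$$
For each fixed $s$, the building blocks $g_s(t):=\rho(s)\min\{1,t/s\}$ and $h_s(t):=\omega(s)\max\{1,t/s\}$ are non-decreasing in $t$, while the ratios $g_s(t)/t$ and $h_s(t)/t$ are non-increasing in $t$. Both monotonicity properties are preserved under suprema and infima, so $\rho^*$ and $\omega^*$ inherit them automatically.

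The envelope relations $\rho^*\ge \rho$ and $\omega^*\le \omega$ come from choosing $s=t$. The remaining defining properties of $\cP$ and $\Omega$ are verified by splitting the sup (resp.\ inf) along the two regimes $s\le t$ and $s\ge t$. For $\rho^*$: the bound $\rho(s)\le s$ gives $g_s(t)\le t$, whence $\rho^*(t)\le t$; combined with $\rho^*(t)\ge \rho(t)=t$ this forces $\rho^*(t)=t$ on $[0,1]$. The decay $\rho^*(t)/t\to 0$ as $t\to\infty$ follows because the regime $s\ge t$ is controlled directly by $\rho(s)/s\to 0$, while in the regime $s\le t$ one chooses $M$ with $\rho(s)/s<\varepsilon$ for $s>M$ and uses $\rho(s)\le M$ for $s\le M$. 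For $\omega^*$: exploiting $\omega(s)\ge s$ on $[0,1]$ and $\omega(s)=s$ on $[1,\infty)$ gives $h_s(t)\ge t$ for all $s$, so $\omega^*(t)\ge t$, with equality on $[1,\infty)$. The blow-up $\omega^*(t)/t\to\infty$ as $t\to 0$ is obtained symmetrically, using the hypothesis $\omega(s)/s\to\infty$ together with a uniform lower bound $\omega(s)\ge Ms$ valid on a neighbourhood of $0$.

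The delicate point is continuity. For $\rho^*$ it comes for free and in a strong form: the slope of each $g_s$ in $t$ is bounded by $\rho(s)/s\le 1$, so the family $\{g_s\}$ is equi-$1$-Lipschitz and hence $\rho^*$ itself is $1$-Lipschitz. For $\omega^*$ the corresponding slopes $\omega(s)/s$ are unbounded as $s\to 0$, so equi-Lipschitzness fails and this is the main obstacle. To circumvent it I would decompose
$$\omega^*(t)=\min\bigl(A(t),\ t\,c(t)\bigr),\qquad A(t):=\inf_{s\ge t}\omega(s),\quad c(t):=\inf_{0<s\le t}\omega(s)/s,$$
and establish continuity of the two monotone auxiliaries $A$ and $c$ separately by an argument by contradiction. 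A would-be jump of $A$ at $t_0$ produces approximate minimizers $s_n$; the condition $\omega(s)=s\to\infty$ rules out $s_n\to\infty$, so after extraction $s_n\to s_*\ge t_0$, and continuity of $\omega$ yields $A(t_0)\le\omega(s_*)\le A(t_0^-)$, contradicting the jump. The argument for $c$ is dual, using $\omega(s)/s\to\infty$ as $s\to 0$ to prevent the approximate minimizers from escaping to $0$. Continuity of $\omega^*$ at $0$ is then immediate from $\omega^*(t)\le\omega(t)\to 0$.
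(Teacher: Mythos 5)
Your construction is correct, and for part (i) it is in fact the paper's construction in disguise: the authors first replace $\varepsilon(t)=\rho(t)/t$ by $\sup_{s\ge t}\varepsilon(s)$ and then set $\varepsilon^*(t)=\frac1t\sup_{s\le t}s\varepsilon(s)$, and unwinding the two nested suprema gives exactly your envelope $\rho^*(t)=\sup_{s>0}\rho(s)\min\{1,t/s\}$. Your packaging via the equi-$1$-Lipschitz building blocks $g_s$ is arguably cleaner, since both monotonicity properties and continuity are inherited in one stroke. The genuine divergence is in part (ii). The paper disposes of it in one line ``by a change of variable'': setting $\rho(t)=1/\omega(1/t)$ sends $\Omega$ into $\cP$, one applies (i), and transforms back via $\omega^*(t)=1/\rho^*(1/t)$; the point of this trick is precisely to sidestep the difficulty you correctly identify, namely that the slopes $\omega(s)/s$ of the blocks $h_s$ blow up near $0$, so the inf-envelope is not obviously continuous. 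Your alternative is to attack the inf-envelope directly, splitting $\omega^*=\min(A(t),t\,c(t))$ into two monotone auxiliaries and excluding jumps by a compactness argument on approximate minimizers (bounded above because $\omega(s)=s\to\infty$, bounded away from $0$ because $\omega(s)/s\to\infty$). This works --- the sketch handles the one-sided limits that monotonicity does not give for free, and the remaining one-sided continuity checks are routine --- but it is noticeably heavier than the paper's reduction. What your route buys is self-containedness and symmetry between (i) and (ii); what the paper's route buys is brevity, at the cost of verifying that the substitution $t\mapsto 1/t$ really exchanges the two classes and the two sets of monotonicity requirements.
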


\begin{proof} For $t>0$, we denote $\varepsilon(t)=\frac{\rho(t)}{t}$. To prove $(i)$ we need to find $\varepsilon^*$ non-increasing, such that $\varepsilon\le \varepsilon^*\le 1$, $\varepsilon^*=1$ on $[0,1]$, $t\mapsto t\varepsilon^*(t)$ is non-decreasing and $\lim_{t\to \infty}\varepsilon^*(t)=0$. First, we may replace $\varepsilon$ by $t\mapsto \sup_{s\ge t}\varepsilon(s)$ and thus assume that $\varepsilon$ is non-increasing. Then we set
$$\forall t\ge 0\ \ \varepsilon^*(t)=\frac{1}{t}\sup_{s\le t} s\varepsilon(s).$$
It is then clear that $t\mapsto t\varepsilon^*(t)$ is non-decreasing, $\varepsilon\le \varepsilon^*\le 1$ and $\varepsilon^*=1$ on $[0,1]$. It easy to check that $\lim_{t\to \infty}\varepsilon^*(t)=0$. Finally, we have to show that $\varepsilon^*$ is also non-increasing. So let $t<t'$. If $s\in [0,t]$, then $\frac{s\varepsilon(s)}{t'}\le \frac{t\varepsilon^*(t)}{t'}\le \varepsilon^*(t)$. On the other hand, if $s\in [t,t']$, we use the fact that $\varepsilon$ is non-increasing to get that $\frac{s\varepsilon(s)}{t'}\le\frac{s}{t'}\varepsilon(t)\le \varepsilon(t)\le \varepsilon^*(t)$. This finishes the proof of $(i)$.\\
The statement $(ii)$ can be deduced from $(i)$ by a change of variable.
\end{proof}

We will need the following weakening of Definition \ref{nearlyisometric}:

\begin{defn}
Let $\cC$ be a class of metric spaces. We say that $(X,d_X)$ nearly isometrically embeds into the class $\cC$ if there exist a family of spaces $(Y_{\gamma,\omega})_{(\rho,\omega)\in(\cP,\Omega)}$ in $\cC$, and a family $(f_{\rho,\omega})_{(\rho,\omega)\in(\cP,\Omega)}$ of maps from $X$ into $Y_{\rho,\omega}$ such that $\rho(t)\le\rho_{f_{\rho,\omega}}(t)$ and $\omega_{f_{\rho,\omega}}(t)\le\omega(t)$.
\end{defn}

The main result of this section is:
\begin{thm}\label{stablethm}
Let $(M,d)$ be a stable metric space. Let $(\rho,\omega)\in\cP\times\Omega$ such that $\rho,\omega$ are non-decreasing and $t\mapsto \frac{\rho(t)}{t}$, $t\mapsto \frac{\omega(t)}{t}$ are non-increasing. Then there exists a reflexive Banach space $(Y,\|\ \|)$ and a map $f\colon M\to Y$ such that for all $x,y \in M$, $$\rho(d(x,y))\le \|f(x)-f(y)\| \le \omega(d(x,y)).$$
\end{thm}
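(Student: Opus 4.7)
The strategy is to revisit Kalton's construction in \cite{Kalton2007} and observe that it can be flexibilized to accommodate arbitrary pairs $(\rho,\omega) \in \cP \times \Omega$ satisfying the stated monotonicity, rather than only the power-type pairs $\rho(t) = \min(t,t^s)$, $\omega(t) = \max(t,t^s)$ yielded by the original theorem. Indeed Kalton's result, stated in our language, is precisely Theorem \ref{stablethm} for that one-parameter family, so what is needed is to isolate where the parameter $s$ enters and to replace it by weights tailored to general $\rho$ and $\omega$.

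The first step I would take is to extract, from Kalton's argument, the underlying scale-by-scale construction: for each $n \in \bz$, stability of $M$ is used to produce a reflexive Banach space $E_n$ and a $1$-Lipschitz map $\phi_n \colon M \to E_n$ with the property that $\phi_n$ is essentially isometric at distances $\leq 2^n$ and saturates (has diameter comparable to $2^n$) beyond that scale; concretely, $\|\phi_n(x)-\phi_n(y)\|_{E_n}$ behaves like $\min(d(x,y),2^n)$. The reflexivity of $E_n$ and the sharpness of these estimates are exactly what Kalton's stability-based ultrapower/free-space construction delivers, and this is the technology I would quote rather than reprove.

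Once the $\phi_n$ are in hand, the plan is to form the weighted $\ell_2$-sum
$$Y \;:=\; \Bigl(\bigoplus_{n \in \bz} E_n\Bigr)_{\!\ell_2}, \qquad f(x) := \bigl(\alpha_n\,\phi_n(x)\bigr)_{n \in \bz},$$
which is reflexive, and to compute
$$\|f(x)-f(y)\|_Y^2 \;=\; \sum_{n \in \bz} \alpha_n^2\,\|\phi_n(x)-\phi_n(y)\|_{E_n}^2 \;\asymp\; \sum_{n\in\bz} \alpha_n^2 \min(d(x,y),2^n)^2.$$
For $t := d(x,y)$ with $2^{n_0}\le t < 2^{n_0+1}$, the sum splits as $\sum_{n\le n_0}\alpha_n^2\,2^{2n} + t^2 \sum_{n>n_0}\alpha_n^2$, and the weights $(\alpha_n)$ should be chosen so that the first piece is of order $\rho(t)^2$ from below and $\omega(t)^2$ from above, while the tail $t^2\sum_{n>n_0}\alpha_n^2$ stays dominated by $\omega(t)^2$. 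A natural telescoping choice uses the sequences $\beta_n := \omega(2^n)/2^n$ and $\delta_n := \rho(2^n)/2^n$, which are non-increasing by hypothesis; for instance $\alpha_n^2 = \beta_n^2 - \beta_{n+1}^2$ yields $t^2\sum_{n>n_0}\alpha_n^2 = t^2\beta_{n_0+1}^2 \le \omega(t)^2$, and the lower bound is secured by inserting the additional contribution from $\delta_{n_0}$ at the critical scale.

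The main obstacle I anticipate is twofold. First, the scale-localized construction of $(\phi_n,E_n)$ must be invoked with its constants tight enough that the final inequalities $\rho(d(x,y))\le \|f(x)-f(y)\|\le \omega(d(x,y))$ hold \emph{exactly}, and not merely up to multiplicative factors; this is delicate because on $[0,1]$ one has $\rho(t)=\omega(t)=t$, so no slack is available to absorb constants near unit distance. Fortunately Kalton's construction is already sharp enough in the model case, so the bookkeeping can be carried through. Second, one must verify that the weight estimates combine correctly across all dyadic scales simultaneously -- an elementary but technical step in which the two monotonicity assumptions on $\rho(t)/t$ and $\omega(t)/t$ are each used in an essential way, the former for the lower bound and the latter for the upper bound.
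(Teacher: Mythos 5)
There is a genuine gap, and it sits exactly where you write ``this is the technology I would quote rather than reprove.'' The building block you posit --- for each $n\in\bz$ a \emph{reflexive} space $E_n$ and a map $\phi_n\colon M\to E_n$ with the two-sided estimate $\|\phi_n(x)-\phi_n(y)\|_{E_n}\asymp\min(d(x,y),2^n)$, i.e.\ essentially isometric at distances $\le 2^n$ --- does not exist for a general stable $M$, and it is not what Kalton's construction delivers. Take $M=\ell_1$ (stable) and restrict such a $\phi_n$ to a ball of radius $2^{n-1}$: you would obtain a bi-Lipschitz embedding of that ball into a reflexive space, hence, by the Gateaux-differentiability argument recalled at the start of Section~\ref{stablesection} (reflexive spaces have the Radon--Nikod\'ym property), a linear isomorphic embedding of $\ell_1$ into a reflexive space, which is impossible. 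This obstruction is precisely why the theorem cannot be proved by gluing scale-localized bi-Lipschitz pieces: no fixed scale of a stable space admits a two-sided reflexive embedding in general. In Kalton's argument the scale-$\sigma$ objects are scalar-valued tent functions $g_{p,q}(x)=\max\{d(p,q)-d(q,x);0\}-\max\{d(p,q)-d(q,0);0\}$, which furnish only the \emph{lower} bound (each $g_{x,y}$ separates $x$ from $y$ by exactly $d(x,y)$); reflexivity is obtained globally at the very end, by showing that the whole normalized family $\{\frac{\rho(d(p,q))}{d(p,q)}g_{p,q}\}$ is weakly relatively compact in the $\omega$-weighted Lipschitz space $\mathrm{Lip}_0^\omega(M)$ and invoking the isometric Davis--Figiel--Johnson--Pe{\l}czy\'nski factorization. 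Stability enters only in that weak-compactness claim (a double-limit interchange), not in any per-scale construction; your proposal never uses stability at a point where it could actually do this work.

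Two secondary remarks. First, the monotonicity hypotheses on $\rho(t)/t$ and $\omega(t)/t$ are needed in the paper to show that the normalized tent functions tend to $0$ uniformly as $d(p,q)\to 0$ or $\infty$ (the key estimates behind weak compactness), not merely for weight bookkeeping in a dyadic sum. Second, even if approximate building blocks were available with some distortion $D>1$, your $\ell_2$-summation could only yield the conclusion up to multiplicative constants, whereas the statement requires the exact inequalities $\rho(d(x,y))\le\|f(x)-f(y)\|\le\omega(d(x,y))$; in the paper these come for free because $\|U^*\|\le 1$, $\|T^*\|\le 1$ and $N_\omega(h_{p,q})\le 1$ in the isometric factorization, with no constants to absorb.
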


Hence, in view of Lemma \ref{technical}, we have:

\begin{cor}\label{stable}
Let $M$ be a stable metric space, then $M$ is nearly isometrically embeddable into the class of reflexive Banach spaces.
\end{cor}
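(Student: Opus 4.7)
The approach is to adapt the construction of Kalton in \cite{Kalton2007} by replacing the single exponent $s\in(0,1)$ used there with a weight function derived from the prescribed moduli $\rho,\omega$. Kalton's original theorem produces, for each $s\in(0,1)$, a map of a stable metric space $M$ into a reflexive Banach space $Y_s$ whose compression is bounded below by $\min(t,t^s)$ and expansion above by $\max(t,t^s)$. Since $\min(t,t^s)=t$ on $[0,1]$ and $\max(t,t^s)=t$ on $[1,\infty)$, Kalton's embedding is in fact isometric on each of the two ranges $[0,1]$ and $[1,\infty)$ up to the polynomial decay rate $t^s$ on the "bad" side. The content of Theorem~\ref{stablethm} is that this polynomial rate can be replaced by an arbitrary admissible pair $(\rho,\omega)$.

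I would first isolate the role of $s$ in Kalton's construction. Using stability, he builds a weakly compact family of "types" on $M$ (pointwise ultrafilter limits of distance functions along bounded sequences) and from this a Banach space $Y_s$ into which $M$ embeds via $x\mapsto\delta_x-\delta_0$. The parameter $s$ enters in the choice of norm on $Y_s$, and the concavity and homogeneity of $t\mapsto t^s$ deliver both reflexivity of $Y_s$ and the two-sided distance estimate. I would then replace $t^s$ by a weight $\psi$ assembled from $\rho$ and $\omega$: take $\psi(t)=t$ on $[0,1]$ (matching $\rho(t)=t$ and $\omega(t)\ge t$ there) and $\psi$ essentially equal to $\rho$ on $[1,\infty)$, smoothed if necessary to preserve continuity. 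The standing hypotheses that $\rho,\omega$ are non-decreasing and that $t\mapsto\rho(t)/t$, $t\mapsto\omega(t)/t$ are non-increasing are exactly what is needed to substitute for the concavity of $t^s$ in Kalton's inequalities, so that the expansion and compression computations in \cite{Kalton2007} transcribe essentially verbatim, with $\rho$ and $\omega$ replacing the respective sides of the polynomial bound.

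The main obstacle is to verify that the modified host space $Y$ remains reflexive. In Kalton's argument reflexivity follows from stability of $M$ combined with specific convexity and smoothness of $t\mapsto t^s$ that are not available for an arbitrary weight $\psi$. The fix is to exploit the asymptotic conditions built into the classes $\cP$ and $\Omega$, namely $\rho(t)/t\to 0$ as $t\to\infty$ and $\omega(t)/t\to\infty$ as $t\to 0$: combined with the assumed monotonicity of $t\mapsto\rho(t)/t$ and $t\mapsto\omega(t)/t$, these properties play the same role as the strict concavity of $t^s$ and yield weak compactness of the relevant unit ball of types, hence reflexivity of $Y$. Once reflexivity is secured, the verification of $\rho(d(x,y))\le\|f(x)-f(y)\|\le\omega(d(x,y))$ reduces to tracking $\rho$ and $\omega$ through Kalton's identities in place of $t^s$, with the monotonicity of the ratios handling the non-homogeneous behaviour of the general moduli.
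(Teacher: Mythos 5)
Your strategy is essentially the one the paper follows for Theorem~\ref{stablethm}: keep Kalton's stable-space machinery, let $\omega$ define a weighted Lipschitz space $\mathrm{Lip}_0^\omega(M)$ whose dual pairing controls the expansion modulus, rescale Kalton's test functions $g_{p,q}$ by $\rho(d(p,q))/d(p,q)$ to control the compression, prove that this family is weakly relatively compact using stability together with the uniform decay of the normalized ratios (which is exactly where the monotonicity of $t\mapsto\rho(t)/t$, $t\mapsto\omega(t)/t$ and the limits $\rho(t)/t\to 0$, $\omega(t)/t\to\infty$ enter, as you predict), and then factor the resulting weakly compact operator through a reflexive space via the isometric Davis--Figiel--Johnson--Pe\l czy\'nski theorem. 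Your single weight $\psi$ slightly conflates two distinct roles ($\omega$ lives in the norm of the host Lipschitz space, $\rho$ in the normalization of the test functionals), but you correctly note that the two moduli govern the two sides of the estimate separately, so this is only a matter of bookkeeping.

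The one genuine gap is at the very start: the monotonicity properties you invoke as ``standing hypotheses'' are not part of the definition of the classes $\cP$ and $\Omega$, which require only continuity, the normalizations near $0$ and at infinity, and the limit conditions. To prove the corollary for an \emph{arbitrary} pair $(\rho,\omega)\in\cP\times\Omega$ you must first regularize: replace $\rho$ by some $\rho^*\in\cP$ with $\rho^*\ge\rho$, and $\omega$ by some $\omega^*\in\Omega$ with $\omega^*\le\omega$, both non-decreasing and with non-increasing ratios $t\mapsto\rho^*(t)/t$ and $t\mapsto\omega^*(t)/t$; an embedding with moduli $(\rho^*,\omega^*)$ is then a fortiori one with moduli $(\rho,\omega)$. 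This is the paper's Lemma~\ref{technical} (essentially $\varepsilon^*(t)=\frac{1}{t}\sup_{s\le t}s\,\varepsilon(s)$ applied to $\varepsilon(t)=\rho(t)/t$ after first making $\varepsilon$ non-increasing), and it is elementary, but without it the construction you describe does not cover the whole class over which near-isometric embeddability is quantified.
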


The proof of Theorem \ref{stablethm} will fill in the rest of this section.
\begin{proof}[Proof of Theorem \ref{stablethm}] Fix an arbitrary point (denoted $0$) in $M$, and define
$$\text{Lip}_0^\omega(M):=\{g\colon M\to M\ |\ g(0)=0\ \text{and}\ \sup_{x\neq y}\frac{|g(y)-g(x)|}{\omega(d(x,y))}<+\infty\},$$
and
$$\forall g\in \text{Lip}_0^\omega(M),\ \ N_\omega(g)=\sup_{x\neq y}\frac{|g(y)-g(x)|}{\omega(d(x,y))}.$$
It is easily checked that $(\text{Lip}_0^\omega(M),N_\omega)$ is a Banach space. Define now the map $\delta\colon M\to \text{Lip}_0^\omega(M)^*$ by $\delta(x)(g)=g(x)$. By construction $N_\omega^*(\delta(x)-\delta(y))\le \omega(d(x,y))$, for all $x,y\in M$ where $N_\omega^*$ denotes the dual norm. Then, following \cite{Kalton2007}, for $(p,q) \in \tilde M=\{(p,q)\in M^2 \colon\ p\neq q\}$ and $x\in M$, we define $$g_{p,q}(x)=\max\{d(p,q)-d(q,x);0\}-\max\{d(p,q)-d(q,0);0\}.$$ Note that
\begin{align}
\forall x,y\in M\ \ |g_{p,q}(x)-g_{p,q}(y)|\le \min\{d(p,q);d(x,y)\}.
\end{align}
Now let
$$h_{p,q}=\frac{\rho(d(p,q))}{d(p,q)}g_{p,q}.$$
Note that $N_\omega(h_{p,q})\le 1$.

\medskip

We will need the following.
\begin{claim}\label{limits} For $(p,q) \in \tilde M$ and $(x,y)\in \tilde M$, consider $\ds R_{p,q}(x,y)=\frac{|h_{p,q}(x)-h_{p,q}(y)|}{\omega(d(x,y))},$ then
\begin{enumerate}[(i)]
\item $R_{p,q}(x,y)\to 0$ as $d(p,q)\to +\infty$ or $d(p,q)\to 0$, uniformly in $(x,y)\in \tilde M$,
\item $R_{p,q}(x,y)\to 0$ as $d(x,y)\to +\infty$ or $d(x,y)\to 0$, uniformly in $(p,q)\in \tilde M$.
\end{enumerate}
\end{claim}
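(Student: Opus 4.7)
The plan is to deduce both parts of the claim from a single master inequality. Multiplying the estimate $|g_{p,q}(x)-g_{p,q}(y)|\le\min\{d(p,q),d(x,y)\}$ just derived above by the scalar $\rho(d(p,q))/d(p,q)$ and dividing by $\omega(d(x,y))$ gives
$$R_{p,q}(x,y)\;\le\;\frac{\rho(d(p,q))}{d(p,q)}\cdot\frac{\min\{d(p,q),d(x,y)\}}{\omega(d(x,y))}.$$
All the remaining work consists of controlling the right-hand side by quantities depending only on the variable being pushed to a limit, and this is done entirely through the structural properties of $(\rho,\omega)\in\cP\times\Omega$ imposed in Theorem~\ref{stablethm}: monotonicity of $\rho$ and $\omega$, non-increase of $t\mapsto\rho(t)/t$ and $t\mapsto\omega(t)/t$, the normalizations $\rho(t)=t$ on $[0,1]$ and $\omega(t)=t$ on $[1,\infty)$ (so in particular $\rho(t)\le t\le\omega(t)$ everywhere), and the asymptotics $\rho(t)/t\to0$ at $\infty$ and $\omega(t)/t\to\infty$ at $0$.

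For part (i), I would split on the size of $d(p,q)$. When $d(p,q)\to\infty$ the first factor already tends to $0$, while the second factor is uniformly bounded by~$1$: indeed $d(x,y)\le\omega(d(x,y))$ handles the subcase $d(x,y)\le d(p,q)$, and in the subcase $d(x,y)\ge d(p,q)$ monotonicity of $\omega$ gives $\omega(d(x,y))\ge\omega(d(p,q))\ge d(p,q)$. When $d(p,q)\to0$, eventually the first factor is identically~$1$, and I would show that the second factor is uniformly dominated by $d(p,q)/\omega(d(p,q))$: in the subcase $d(x,y)\le d(p,q)$ this follows from the non-decrease of $t\mapsto t/\omega(t)$, and in the subcase $d(x,y)\ge d(p,q)$ from the monotonicity of $\omega$ alone. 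Since $d(p,q)/\omega(d(p,q))\to 0$, uniformity in $(x,y)$ follows.

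Part (ii) uses the same master inequality, this time splitting on whether $d(p,q)\le d(x,y)$ or $d(p,q)\ge d(x,y)$. When $d(x,y)\to\infty$ one has $\omega(d(x,y))=d(x,y)$ and both subcases yield $R_{p,q}(x,y)\le\rho(d(x,y))/d(x,y)$: in the first subcase via $\rho(d(p,q))\le\rho(d(x,y))$ (monotonicity of $\rho$), in the second via $\rho(d(p,q))/d(p,q)\le\rho(d(x,y))/d(x,y)$ (non-increase of $\rho(t)/t$). When $d(x,y)\to 0$, symmetric arguments, together with $\rho(t)\le t$, give $R_{p,q}(x,y)\le d(x,y)/\omega(d(x,y))\to0$ in both subcases.

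I do not anticipate a serious obstacle; the claim is essentially a bookkeeping exercise, and the monotonicity hypotheses hard-wired into the statement of Theorem~\ref{stablethm} are precisely what is needed to transfer the control from one factor of the master inequality to the other, uniformly in the variable that is being held free. The only point warranting some care is making sure, in each of the four regimes, to exploit monotonicity of $\omega$ (respectively $\rho$) in the subcase where $\omega(t)/t$ (respectively $\rho(t)/t$) cannot be used, and vice versa.
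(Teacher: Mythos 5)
Your proposal is correct and follows essentially the same route as the paper: the same master inequality $R_{p,q}(x,y)\le\frac{\rho(d(p,q))}{d(p,q)}\cdot\frac{\min\{d(p,q),d(x,y)\}}{\omega(d(x,y))}$, the same case splits in each of the four regimes, and the same use of the monotonicity of $\rho,\omega$ and of $t\mapsto\rho(t)/t$, $t\mapsto\omega(t)/t$ together with the normalizations on $[0,1]$ and $[1,\infty)$. No gaps.
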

\begin{proof}[Proof of Claim \ref{limits}]  We have that
$$ \forall (x,y)\in \tilde M \ \ R_{p,q}(x,y)\le \frac{\rho(d(p,q))}{d(p,q)}\frac{\min\{d(p,q);d(x,y)\}}{\omega(d(x,y))}.$$

$(i)$ a) For all $(x,y)\in \tilde M$, $\ds R_{p,q}(x,y)\le \frac{\rho(d(p,q))}{d(p,q)}\frac{d(x,y)}{\omega(d(x,y))}\le \frac{\rho(d(p,q))}{d(p,q)}$, and $R_{p,q}(x,y)\to 0$ as $d(p,q)\to +\infty$  uniformly in $(x,y)\in \tilde M$.

\smallskip

b) Assume $d(p,q)\le 1$.\\
If $d(x,y)\le d(p,q)$ then $\ds R_{p,q}(x,y)\le\frac{d(x,y)}{\omega(d(x,y))}\le \frac{d(p,q)}{\omega(d(p,q))},$ because $\ds t\mapsto \frac{t}{\omega(t)}$ is non-decreasing. Otherwise
$$R_{p,q}(x,y)\le\frac{d(p,q)}{\omega(d(x,y))}
=\frac{\omega(d(p,q))}{\omega(d(x,y))}\frac{d(p,q)}{\omega(d(p,q))}
\le\frac{d(p,q)}{\omega(d(p,q))},$$
because $\omega$ is non-decreasing.\\
Therefore $R_{p,q}(x,y)\to 0$ as $d(p,q)\to 0$,  uniformly in $(x,y)\in \tilde M$.

\smallskip

$(ii)$ a) Assume that $d(x,y)>1$.\\

If $d(p,q)\le d(x,y)$ then $\ds R_{p,q}(x,y)\le\frac{\rho(d(p,q))}{d(p,q)}\frac{d(p,q)}{d(x,y)}\le \frac{\rho(d(x,y))}{d(x,y)},$
because $\rho$ is non-decreasing.\\
If $d(p,q)\ge d(x,y)$ then $\ds R_{p,q}(x,y)\le\frac{\rho(d(p,q))}{d(p,q)}\le \frac{\rho(d(x,y))}{d(x,y)},$
because $\ds t\to\frac{\rho(t)}{t}$ is non-increasing.\\
Therefore $R_{p,q}(x,y)\to 0$ as $d(x,y)\to +\infty$  uniformly in $(p,q)\in \tilde M$.

\smallskip

b) If $d(x,y)\le 1$, then
$$R_{p,q}(x,y)\le\frac{\rho(d(p,q))}{d(p,q)}\frac{d(x,y)}{\omega(d(x,y))}\le \frac{d(x,y)}{\omega(d(x,y))}.$$
Therefore $R_{p,q}(x,y)\to 0$ as $d(x,y)\to 0$  uniformly in $(p,q)\in \tilde M$.\\
\end{proof}

Then we deduce:

\begin{claim}\label{weakcompact} The set $W=\{h_{p,q};\ (p,q)\in \tilde M\}$ is weakly relatively compact in $\text{Lip}_0^\omega(M)$.
\end{claim}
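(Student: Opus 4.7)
By the Eberlein-Šmulian theorem, it suffices to extract a weakly convergent subsequence from any sequence $(h_n)=(h_{p_n,q_n})$ in $W$. Claim \ref{limits}(i) implies that $N_\omega(h_{p,q})\to 0$ as $d(p,q)\to 0$ or $d(p,q)\to\infty$, so after a first extraction either $h_n\to 0$ in norm (and hence weakly) or $d(p_n,q_n)$ lies in a fixed compact subinterval $[a,b]\subset(0,\infty)$. I focus on the second case.

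Next, I would identify $\text{Lip}_0^\omega(M)$ with the dual of a natural ``$\omega$-free'' space $\mathcal F^\omega(M)$, defined as the closed linear span in $(\text{Lip}_0^\omega(M))^*$ of the point-evaluation functionals $\delta_x$, $x\in M$. Note that each $\delta_x$ is continuous on $\text{Lip}_0^\omega(M)$ since $|h(x)|=|h(x)-h(0)|\le N_\omega(h)\,\omega(d(x,0))$. Under the duality $\text{Lip}_0^\omega(M)=(\mathcal F^\omega(M))^*$, weak$^*$ convergence of a bounded sequence coincides with pointwise convergence on $M$. Using the uniform equicontinuity $|h_n(x)-h_n(y)|\le\omega(d(x,y))$ together with a standard diagonal extraction on a countable subset of $M$ containing the $p_n$'s and $q_n$'s, and extending the pointwise limit continuously to the whole of $M$, I would produce a subsequence converging pointwise (hence weak*-ly) to some $h\in\text{Lip}_0^\omega(M)$ with $N_\omega(h)\le 1$.

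The crux of the proof---and the main obstacle---is to upgrade this weak$^*$ convergence to \emph{weak} convergence. For this I would appeal to Grothendieck's double-limit characterization of weak relative compactness: a bounded subset of a Banach space is weakly relatively compact if and only if, for every sequence in the set and every sequence of functionals in the dual unit ball, the two iterated limits of the pairings agree whenever both exist. By the norm-density in $\mathcal F^\omega(M)$ of finite linear combinations of molecules $\delta_x-\delta_y$ (with $x,y$ in bounded subsets of $M$, which suffices after our reductions), the commutativity to verify reduces to
\[ \lim_{m}\lim_{n}\bigl(h_{p_n,q_n}(x_m)-h_{p_n,q_n}(y_m)\bigr)=\lim_{n}\lim_{m}\bigl(h_{p_n,q_n}(x_m)-h_{p_n,q_n}(y_m)\bigr) \]
for bounded sequences $(x_m),(y_m)\subset M$, whenever both sides exist. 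Since $h_{p,q}(x)-h_{p,q}(y)$ is a uniformly continuous function of the mutual distances among $\{p,q,x,y,0\}$, and all relevant sequences are bounded (by construction and by the reduction step), the stability of $M$---which asserts precisely that iterated limits of distances of bounded sequences along non-principal ultrafilters commute---delivers exactly the required commutativity. The delicate point I expect to have to argue carefully is the reduction of the Grothendieck criterion to pairings with molecules; this will follow from the density of molecules in $\mathcal F^\omega(M)$ together with the uniform bound $\sup_n N_\omega(h_n)\le 1$, which makes the approximation in $\mathcal F^\omega(M)$ uniformly good over the whole sequence $(h_n)$.
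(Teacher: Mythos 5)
Your strategy is a legitimate repackaging of the paper's (and Kalton's) argument: where the paper pushes the closed span of the $h_n$'s into $\ell_\infty(\tilde M_0)$ via the de Leeuw--type isometry $g\mapsto\big(\tfrac{g(x)-g(y)}{\omega(d(x,y))}\big)_{(x,y)}$, passes to a separable subalgebra $C(K)$, and tests weak convergence there by pointwise convergence on $K$ together with dominated convergence, you dualize through the $\omega$-free space and invoke Grothendieck's double limit criterion against normalized molecules. Both routes reduce to the same iterated-limit computation, and the degenerate regimes ($d(p_n,q_n)\to0,\infty$; $d(x_m,y_m)\to0,\infty$) are correctly delegated to Claim \ref{limits}. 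One technical caveat on your reduction: the functionals in the double limit criterion must range over the unit ball of $\text{Lip}_0^\omega(M)^*$, not of the predual, so what you actually need is the \emph{norming-set} form of Grothendieck's criterion (the normalized molecules are $1$-norming for $N_\omega$); approximating a general element of $\mathcal F^\omega(M)$ by finite linear combinations of molecules does not by itself reduce the check to single molecules, since a sequence of such combinations is not a sequence of molecules.

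The genuine gap is your assertion that ``all relevant sequences are bounded (by construction and by the reduction step).'' This is false: your first extraction confines $d(p_n,q_n)$ to a compact interval but says nothing about $d(q_n,0)$, and nothing at all constrains the sequences $(x_m),(y_m)$ coming from the molecules --- they must range over all of $\tilde M$ for the molecules to be norming. Since stability is hypothesized only for \emph{bounded} sequences, you cannot apply it as stated. The repair (implicit in the paper's use of a compactification and of limits taken in $[0,+\infty]$) is a case analysis: pass to subsequences so that all iterated limits of the relevant distances exist in $[0,+\infty]$; if $(q_n)$ or $(x_m)$ escapes to infinity, both iterated limits of $d(q_n,x_m)$ equal $+\infty$ and the truncation $\max\{d(p,q)-\cdot\,,0\}$ in the definition of $g_{p,q}$ saturates at $0$, so the commutation is trivial there; the cases $d(x_m,y_m)\to0$ or $\to\infty$ are handled uniformly by Claim \ref{limits}(ii); and only in the remaining, genuinely bounded configuration is stability invoked. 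Without this analysis your appeal to stability does not go through.
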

\begin{proof}[Proof of Claim \ref{weakcompact}] Here one just has to mimic the corresponding part of the proof of Theorem 2.1 in \cite{Kalton2007}. Let us recall the details. It follows from the Eberlein-\v{S}mulian theorem that it is enough to show that any sequence $(h_n)_{n=1}^\infty=(h_{p_n,q_n})_{n=1}^\infty$, with $(p_n,q_n)\in \tilde M$ admits a weakly convergent subsequence. The closed linear span of $\{h_n\colon n\ge 1\}$, denoted $[h_n\colon n\ge 1]$ is separable. So, there exists a countable subset $M_0$ of $M$ containing $0$ and all $p_n,q_n$ for $n\ge 1$ such that
$$\forall g\in [h_n\colon n\ge 1]\ \ \|g\|_{\text{Lip}_0^\omega(M)}=\|g\|_{\text{Lip}_0^\omega(M_0)}.$$
Using a diagonal argument, we may assume that  $(h_n)_{n=1}^\infty$ converges pointwise on $M_0$ to a function $h$. We may also assume that $\lim_{n\to +\infty}d(p_n,q_n)=r\in [0,+\infty]$ and that for all $x\in M_0$ $\lim_{n\to +\infty}d(x,q_n)$ exists in $[0,+\infty]$. In particular, denote $s=\lim_{n\to +\infty}d(0,q_n)$. Now we define $V\colon [h_n\colon n\ge 1]\to \ell_\infty(\tilde{M_0})$, where $\tilde{M_0}=\{(p,q)\in M_0^2\colon p\neq q\}$ by
$$\forall g\in \text{Lip}_0^\omega(M),\ \ Vg=\Big(\frac{g(x)-g(y)}{\omega(d(x,y))}\Big)_{(x,y)\in \tilde{M_0}}.$$
By construction, $V$ is a linear isometry from $[h_n\colon n\ge 1]$ into $\ell_\infty(\tilde{M_0})$. So it is enough for us to show that the sequence $(Vh_n)_{n=1}^\infty$ is weakly convergent to $Vh$. Following \cite{Kalton2007}, we denote $A$ the closed subalgebra of $\ell_\infty(\tilde{M_0})$, generated by the constant functions, $V([h_n;\ n\ge 1])$, the maps $(x,y)\mapsto \arctan(d(x,u))$ and $(x,y)\mapsto \arctan(d(y,u))$, for $u\in M_0$ and the map $(x,y)\mapsto \arctan(d(x,y))$. Since $A$ is separable, there exists a metrizable compactification $K$ of $\tilde{M_0}$ such that every $f$ in $A$ admits a unique continuous extension to $K$. With this description of $A$, it follows from the dominated convergence theorem that we only need to prove that for all $\xi\in K$, $\lim_{n\to\infty}Vh_n(\xi)=Vh(\xi)$.\\
If $r=0$ or $r=+\infty$, it follows from $(i)$ in Claim \ref{limits} that $\lim_{n\to \infty}\|Vh_n\|=0$. Therefore $\lim_{n\to \infty}\|h_n\|=0$ and $h\equiv0$. Thus, we may assume that $0<r<+\infty$.\\
Let now $\xi\in K$. Then pick $(x_m,y_m)\in \tilde{M_0}$ such that $(x_m,y_m)\to \xi$ in $K$ and denote $t=\lim_{m\to \infty}d(x_m,y_m)$. \\
First note that if $t=0$ or $t=+\infty$, it follows from $(ii)$ in Claim \ref{limits} that for all $n\ge 1$, $Vh_n(\xi)=Vh(\xi)=0$.\\
Thus, for the sequel, we may and will assume that $0<r<+\infty$ and $0<t<+\infty$.
By taking a further subsequence, we may also assume that $\lim_m\lim_n d(q_n,x_m)$ exists in $[0,+\infty]$. Then it follows from the stability of $M$ that $\lim_m\lim_n d(q_n,x_m)=\lim_n\lim_m d(q_n,x_m)$ and therefore that $\lim_m\lim_n h_n(x_m)=\lim_n\lim_m h_n(x_m)$.\\
Since $Vh\in C(K)$ and $(h_n)_{n\ge 1}$ converges pointwise to $h$ on $M_0$, we have
$$Vh(\xi)=\lim_{m\to \infty}Vh(x_m,y_m)=\lim_{m\to \infty}\frac{h(x_m)-h(y_m)}{\omega(d(x_m,y_m))}=\lim_{m\to\infty}\lim_{n\to \infty}\frac{h_n(x_m)-h_n(y_m)}{\omega(d(x_m,y_m))}.$$
Finally, since $M$ is stable  and $Vh_n \in C(K)$ we obtain
$$Vh(\xi)=\lim_{n\to\infty}\lim_{m\to \infty}\frac{h_n(x_m)-h_n(y_m)}{\omega(d(x_m,y_m))}=
\lim_{n\to\infty}Vh_n(\xi).$$
This concludes the proof of this Claim.

%If $L$ is the spectrum of $B=\ell_\infty(\tilde{M_0})$, it is easily seen that $\{\xi\restrict{A},\ \xi\in L\}$ is closed in $(K,\sigma(A,A^*))$. It also norming for $A=C(K)$. Then it follows that $K=\{\xi\restrict{A},\ \xi\in L\}$. Now, it known that $L$ is the Stone-Cech compactification of $\tilde{M_0}$.

\end{proof}

We now proceed with the construction of the embedding $f$. Consider the operator\\ $S:\ell_1(W)\to (\text{Lip}_0^\omega(M),N_\omega)$ defined by
$$\forall \xi=(\xi_h)_{h\in W}\in \ell_1(W)\ \ S(\xi)=\sum_{h\in W}\xi_hh.$$
Since every $h \in W$ is in the unit ball of $N_\omega$, we have that $\|S\|\le 1$. Moreover, it follows from Claim \ref{weakcompact} that $S$ is a weakly compact operator. Then the isometric version \cite{LimaLimaOja2000} of a factorization theorem of Davis, Figiel, Johnson, and Pe\l czy\'nski \cite{DavisFJP74} yields the existence of a reflexive Banach space $X$, and of linear maps $T:\ell_1(W)\to X$ and $U:X\to  \text{Lip}_0^\omega(M)$ such that $\|T\|\le 1$, $\|U\|\le 1$, and $S=UT$. Then we define $f\colon M\to Y=X^*$ by
$$\forall x\in M\ \ f(x)=U^*(\delta(x)).$$
First, we clearly have that
$$\forall x,y\in M\ \ \|f(x)-f(y)\|\le \|U^*\|\,N_\omega^*\big(\delta(x)-\delta(y)\big)\le \omega(d(x,y)).$$
On the other hand, since $S^*=T^*U^*$ and $\|T^*\|\le 1$, we have that for all $(x,y)\in \tilde M$:
$$\|f(x)-f(y)\|\ge \|S^*(\delta(x)-\delta(y))\|_{\ell_\infty(W)}\ge\big\langle h_{x,y},\delta(x)-\delta(y)\big\rangle=\rho(d(x,y)).$$
This finisshes the proof, as $f$ provides the desired embedding into the reflexive Banach space $Y=X^*$.

\end{proof}

\section{Optimality}\label{optimality}

 Our first statement describes what can be said of a Banach space that contains a bi-Lipschitz copy of every compact subset of a given separable Banach space. This is a generalization of an unpublished argument due to N. Kalton in the particular case of $X=c_0$, that was already mentioned in \cite{Baudierthesis}.

\begin{prop}\label{kalton} Let $X$ be a separable Banach space.

\begin{enumerate}[(i)]
\item There exists a compact subset $K$ of $X$ such that, whenever $K$ bi-Lipschitzly embeds into a Banach space $Y$, then $X$ linearly embeds into $Y^{**}$. In particular, $X$ is crudely finitely representable into $Y$. If moreover, $Y$ has the Radon-Nikod\'{y}m property, then $X$ linearly embeds into $Y$.\smallskip

\item For any $p\in [1,\infty)$, there exits a compact subset $K_p$ of $L_p$ which is almost Lipschitzly embeddable but not bi-Lipschitzly embeddable in $\ell_p$ or $\Big(\sum_{n=1}^\infty \ell_p^n\Big)_{\ell_2}$.\smallskip

\item There exists a compact subset $K_\infty$ of $c_0$ which is almost Lipschitzly embeddable but not bi-Lipschitzly embeddable in $\Big(\sum_{n=1}^\infty \ell_\infty^n\Big)_{\ell_2}$.
\end{enumerate}
\end{prop}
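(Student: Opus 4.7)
The plan is to prove (i) as the crux of the proposition, and then derive (ii) and (iii) by specializing $X$ and combining (i) with Corollary \ref{Lp}. The compactum $K$ in (i) will be a Kalton-type compactum encoding all finite-dimensional pieces of $X$ at vanishing scales; the core difficulty is linearizing a bi-Lipschitz embedding of $K$ into $Y$.

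For (i), fix an increasing sequence $(E_n)_{n=1}^\infty$ of finite-dimensional subspaces of $X$ with dense union, and set
$$K := \{0\}\cup\bigcup_{n=1}^\infty \tfrac{1}{n}B_{E_n}.$$
Since the tails $\bigcup_{n\ge N}\tfrac{1}{n}B_{E_n}$ sit in $\tfrac{1}{N}B_X$ and each $\tfrac{1}{n}B_{E_n}$ is compact, $K$ is itself compact. Suppose $f\colon K\to Y$ is bi-Lipschitz with constants $a\le b$. Rescaling, for each $n$ one obtains a bi-Lipschitz embedding $\phi_n\colon B_{E_n}\to Y$, $\phi_n(x):=n\bigl(f(x/n)-f(0)\bigr)$, with the same constants. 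The Heinrich--Mankiewicz linearization (invariant mean on the finite-dimensional abelian group $E_n$) then produces linear operators $T_n\colon E_n\to Y^{**}$ with $a\|x\|\le\|T_n x\|\le b\|x\|$. This already yields crude finite representability of $X$ in $Y$ via the principle of local reflexivity. To obtain a single global linear embedding $T\colon X\to Y^{**}$, one passes to an ultraproduct of the $(T_n)$'s (exploiting the separability of $X$) and descends via local reflexivity a second time, or equivalently performs a careful diagonalization. If moreover $Y$ has the RNP, each $\phi_n$ is Gateaux differentiable on a set of full Lebesgue measure in $B_{E_n}$; differentiating at a density point puts $T_n$ directly in $Y$, and an analogous gluing produces $X\hookrightarrow Y$.

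The main obstacle is the linearization step, and in particular the coherent gluing of the $T_n$'s into a global linear embedding. The subtlety is that different applications of Heinrich--Mankiewicz (or different choices of differentiability points in the RNP case) may yield mutually incompatible $T_n$'s on nested $E_n\subset E_{n+1}$; the remedy is to perform all constructions simultaneously inside a single ultrafilter/ultraproduct, then descend to $Y^{**}$ (respectively $Y$ in the RNP case) via local reflexivity applied to the separable $X$.

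For (ii), take $X=L_p$ and let $K_p$ be the compactum from (i) (the case $p=2$ is trivial, since $L_2\cong\ell_2$, so one reads the statement for $p\ne 2$). Both $\ell_p$ and $\bigl(\sum_n\ell_p^n\bigr)_{\ell_2}$ contain the $\ell_p^n$'s uniformly, so Corollary \ref{Lp} delivers almost Lipschitz embeddability of $K_p$ into either target. Both targets have the RNP (reflexive for $p\in(1,\infty)$, while $\ell_1$ has the RNP directly), so by (i) any bi-Lipschitz embedding of $K_p$ would force a linear embedding of $L_p$. This is ruled out: $L_p$ contains $\ell_2$ but $\ell_p$ does not (for $p\ne 2$), $\ell_1$ has the Schur property while $L_1$ does not, and by Rosenthal's classical analysis $L_p$ does not linearly embed into $\bigl(\sum_n\ell_p^n\bigr)_{\ell_2}$ either. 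For (iii), apply (i) to $X=c_0$; the resulting $K_\infty\subset c_0$ is almost Lipschitzly embeddable in $Y=\bigl(\sum_n\ell_\infty^n\bigr)_{\ell_2}$ by Corollary \ref{Lp}. But $Y$ is a reflexive $\ell_2$-sum of finite-dimensional spaces, so $Y^{**}=Y$, and a bi-Lipschitz embedding of $K_\infty$ into $Y$ would by (i) produce $c_0\hookrightarrow Y$ linearly, contradicting the non-reflexivity of $c_0$.
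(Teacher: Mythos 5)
Your finite-dimensional steps are fine (the rescaling $\phi_n(x)=n(f(x/n)-f(0))$ and the Heinrich--Mankiewicz linearization on each $E_n$), but the gluing step is a genuine gap, and it is fatal for exactly the parts of the statement you need. Having linear embeddings $T_n\colon E_n\to Y^{**}$ with uniform constants $a\le b$ yields only crude finite representability of $X$ in $Y$; it does \emph{not} yield a linear embedding of $X$ into $Y^{**}$, and no ultraproduct/local-reflexivity descent can produce one. The principle of local reflexivity controls finite-dimensional subspaces only: a separable infinite-dimensional subspace of an ultrapower $Y_{\mathcal U}$ need not embed into $Y^{**}$. The concrete counterexample is the very situation of part (ii): for $p\ne 2$ every finite-dimensional subspace of $L_p$ embeds $(1+\varepsilon)$-isomorphically into $\ell_p$, yet $L_p$ does not embed into $\ell_p=\ell_p^{**}$ (and $\ell_p$ has the RNP). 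Consequently your proof of (i) establishes only the ``in particular'' clause, and your proofs of (ii) and (iii) collapse, since they invoke precisely the conclusion (a global linear embedding of $L_p$, resp.\ $c_0$, into the target or its bidual) that the argument does not deliver; crude finite representability of $L_p$ in $\ell_p$ is true and gives no contradiction. Indeed it is not even clear that your compactum $\{0\}\cup\bigcup_n \tfrac1n B_{E_n}$ fails to embed bi-Lipschitzly into $\ell_p$.

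The missing idea, which is the heart of the paper's proof, is to choose $K$ so that the hypothetical embedding $f$ can be differentiated \emph{globally} rather than piecewise. The paper takes a Markushevich basis $(x_n,x_n^*)$ of the separable space $X$, forms the compact operator $S=\sum_n a_n\, x_n^*\otimes x_n$ with $\sum_n a_n\|x_n\|\|x_n^*\|\le 1$, and sets $K=\overline{S(B_X)}$. Then $f\circ S$ is a single Lipschitz map defined on the whole open ball $B_X$, with values in a separable dual, so the Heinrich--Mankiewicz theorem gives weak$^*$-G\^ateaux differentiability off a Gauss-null set; a density-point argument (Claim~\ref{Gaussnull}) shows that at some single point $x$ the derivative satisfies $\|S(k)\|\le\|D^*_{f\circ S}(x)(k)\|\le C\|S(k)\|$ for all $k$ in the dense span simultaneously. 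Setting $V(S(k))=D^*_{f\circ S}(x)(k)$ and extending by density produces the global linear embedding $X\to Y^{**}$ in one stroke, with no gluing; under the RNP one differentiates in $Y$ itself. Your $K$ admits no Lipschitz parametrization by $B_X$, so this mechanism is unavailable, and the coherence problem you flag cannot be repaired by an ultrafilter.
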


\begin{proof} Since $(ii)$ and $(iii)$ are easy consequences of $(i)$, Corollary \ref{Lp} and classical linear Banach space theory, we just prove $(i)$. Since $X$ is separable, we can find a biorthogonal system  $(x_n,x_n^*)_{n=1}^\infty$ in $X\times X^*$ such that the linear span of $\{x_n\colon\ n\ge 1\}$ is dense in $X$ (see \cite{Markushevich1943}). We now pick a decreasing sequence $(a_n)_{n=1}^\infty$ of positive real numbers such that
$$\sum_{n=1}^\infty a_n\|x_n\|\,\|x_n^*\|\le 1,$$
and for all $x\in X$ we define
$$S(x)=\sum_{n=1}^\infty a_n x_n^*(x)x_n.$$
Then $S$ is clearly a compact operator on $X$ and $\|S\|\le 1$. Finally, the norm closure of $S(B_X)$ is a compact subset of $X$, denoted by $K$ in the sequel.

\noindent Assume now that $f:K\to Y$ is a map such that for all $x,x'\in K$,
$$\|x-x'\|_X\le \|f(x)-f(x')\|_Y\le C\|x-x'\|_X.$$ Then the map $f\circ S$ is $C$-Lipschitz on the open unit ball $B_X$ of $X$. Let us consider $f\circ S$ as a map from $B_X$ to $Y^{**}$. Since $X$ is separable, we can actually consider $f$ as map from $B_X$ to the dual $E^*$ of a separable Banach space $E$ (see Proposition F.8 and the proof of Corollary 7.10 in \cite{BenyaminiLindenstrauss2000}). Then it follows from the work of Heinrich and Mankiewicz \cite{HeinrichMankiewicz1982} (see also Corollary 6.44 in \cite{BenyaminiLindenstrauss2000}), that $B_X\setminus W$ is Gauss-null, where $W$ is the set of all $x\in B_X$ such that $f\circ S$ is weak$^*$-Gâteaux differentiable at $x$. We again refer to \cite{BenyaminiLindenstrauss2000} for the definition and properties of Gauss-null sets. For $x\in W$, we denote $D^*_{f\circ S}(x)$ the weak$^*$-Gâteaux derivative of $f\circ S$ at $x$. We will need the following.

\begin{claim}\label{Gaussnull} For any $k$ in the linear span of $\{x_n\colon\ n\ge 1\}$ and any  $\delta<1$, the set
$$W_{k,\delta}=\{x\in W\colon\ \|D^*_{f\circ S}(x)(k)\|< \delta\|S(k)\|\}$$
is Gauss-null
\end{claim}

\begin{proof}[Proof of Claim \ref{Gaussnull}] We will mimic the proof of Theorem 7.9 in \cite{BenyaminiLindenstrauss2000} (also due to Heinrich and Mankiewicz \cite{HeinrichMankiewicz1982}). Let us assume as we may that $\|k\|=1$. To see that $W_{k,\delta}$ is Gauss-null, it is enough to prove that for any line $L$ in the direction of $k$, $L\cap W_{k,\delta}$ is of Lebesgue measure (denoted $m$) equal to 0. If not, there exists a density point $z_0\in L\cap W_{k,\delta}$. Then we can find $\eps>0$ such that $ m([z_0,z_o+\eps k]\cap W_{k,\delta})> \eps(1-\frac{1-\delta}{C})$.\\
We denote $A=[z_0,z_o+\eps k]\cap W_{k,\delta}$ and $B= [z_0,z_o+\eps k]\setminus A$.\\
Since $S$ is linear, we have $\|(f\circ S)(z_0+\eps k)-(f\circ S)(z_0)\|\ge \eps\|S(k)\|$. Thus, there exists $y^*$ in the unit sphere of $Y^*$ such that $$\big\langle (f\circ S)(z_0+\eps k)-(f\circ S)(z_0),y^*\big\rangle\ge \eps\|S(k)\|.$$ Consider now $\varphi(t)=\big\langle (f\circ S)(z_0+t k)-(f\circ S)(z_0),y^*\big\rangle$. Using again the linearity of $S$,we note that $\varphi$ is $C\|S(k)\|$-Lipschitz. Thus $\varphi$ is almost everywhere differentiable and

\begin{align*}
              &\eps\|S(k)\| \le |\varphi(\eps)-\varphi(0)|=\int_0^\eps \varphi'(t)\, dt \le \int_A \delta\|S(k)\|\,dm+\int_BC\|S(k)\|\, dm\\
              &\le \eps \delta \|S(k)\|+ m(B)C\|S(k)\|< \eps \delta \|S(k)\|+ \frac{\eps(1-\delta)}{C}C\|S(k)\|= \eps\|S(k)\|,
\end{align*}
which is a contradiction.

\end{proof}

We now proceed with the proof of Proposition \ref{kalton} and pick $D$ a countable dense subset of the linear span of $\{x_n\colon\ n\ge 1\}$. Then, it follows from Claim \ref{Gaussnull} that there exists $x\in W$ such that for all $k\in D$,  and therefore for all $k\in X$, $$\|D^*_{f\circ S}(x)(k)\|\ge \|S(k)\|.$$
On the other hand, it is clear that for all $k\in X$ $$\|D^*_{f\circ S}(x)(k)\|\le C\|S(k)\|.$$

Let now $h$ be in the linear span of $\{x_n\colon\ n\ge 1\}$. There exists $N\in \mathbb N$ such that $h=\sum_{n=1}^N x_n^*(h)x_n$. Then, we have that $h=S(k)$, where $\ds k=\sum_{n=1}^N \frac{1}{a_n} x_n^*(h)x_n$ belongs to the linear span of $\{x_n\colon\ n\ge 1\}$. Define $V(h):=D^*_{f\circ S}(x)(k)$. It follows from the above inequalities that $\|h\|\le \|V(h)\|\le C\|h\|$. Therefore $V$ extends to a linear embedding from $X$ into $Y^{**}$.

\smallskip

If $Y$ has the Radon-Nikod\'{y}m property, then $f\circ S$ admits a point of Gâteaux-differentiability. It then follows, with similar but easier arguments, that $X$ linearly embeds into $Y$.
\end{proof}

Using a classical argument of G. Schechtman, we deduce that Corollary \ref{Lp} is optimal.

\begin{cor}\label{optimal} Let $X$ be a separable Banach space.

\begin{enumerate}[(i)]
\item There exists a compact subset $K$ of $X$ such that, whenever $K$ almost Lipschitzly embeds into a Banach space $Y$, then $X$ is crudely finitely representable into $Y$.\smallskip
\item There exists a compact subset $K$ of $X$ such that, whenever $K$ nearly isometrically embeds into a Banach space $Y$, then $X$ is finitely representable into $Y$.\smallskip

\item For any $p\in [1,\infty)$, there exists a compact subset $K_p$ of $L_p$ such that,
$K_p$ almost Lipschitzly embeds into $Y$ if and only if $Y$ uniformly contains the $\ell_p^n$'s.\smallskip

\item There exists a compact subset $K_\infty$ of $c_0$ such that,
$K_\infty$ almost Lipschitzly embeds into $Y$ if and only if $Y$ uniformly contains the $\ell_\infty^n$'s.
\end{enumerate}
\end{cor}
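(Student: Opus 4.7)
The plan is to reduce each statement to Proposition \ref{kalton} by way of an ultrapower construction. The key intermediate observation is the following: an almost Lipschitz embedding of a bounded metric space $K$ into a Banach space $Y$ yields a genuine bi-Lipschitz embedding of $K$ into an ultrapower $Y^{\mathcal U}$, and likewise, a nearly isometric embedding of $K$ into $Y$ yields an isometric embedding of $K$ into $Y^{\mathcal U}$.

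To prove part (i), take $K\subset X$ as in Proposition \ref{kalton}(i); it is bounded since compact. Pick any sequence $(\varphi_n)\subset\Phi$ with $\varphi_n(t)\to 1$ for every $t>0$ (for example $\varphi_n(t)=(1-2^{-n})(1-e^{-t})$), fix a base point $x_0\in K$, and let $\tilde f_n=f_{\varphi_n}-f_{\varphi_n}(x_0)$, which is uniformly bounded by $Dr\,\diam(K)$. The map $g\colon K\to Y^{\mathcal U}$ given by $g(x)=[(\tilde f_n(x))_n]$ is then well defined, and the bounds $\rho_{f_{\varphi_n}}(t)\ge rt\varphi_n(t)$ and $\omega_{f_{\varphi_n}}(t)\le Drt$ pass to the ultrafilter limit to yield
$$r\|x-y\|\le \|g(x)-g(y)\|_{Y^{\mathcal U}}\le Dr\|x-y\|,\qquad x,y\in K.$$
Applying Proposition \ref{kalton}(i) to $K$ and to $Y^{\mathcal U}$ produces a linear embedding $X\hookrightarrow (Y^{\mathcal U})^{**}$ of distortion at most $D$. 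Since $(Y^{\mathcal U})^{**}$ is finitely representable in $Y^{\mathcal U}$ (by the principle of local reflexivity) and $Y^{\mathcal U}$ is finitely representable in $Y$ (a standard property of ultrapowers), $X$ is crudely finitely representable in $Y$. Part (ii) proceeds in exactly the same way: the defining conditions of nearly isometric embeddability force $\rho(t)=t$ on $[0,1]$ and $\omega(t)=t$ on $[1,\infty)$, and one can choose sequences $(\rho_n,\omega_n)$ with $\rho_n(t),\omega_n(t)\to t$ for every $t>0$, producing an isometric embedding $K\hookrightarrow Y^{\mathcal U}$; Proposition \ref{kalton}(i) then delivers an isometric linear embedding $X\hookrightarrow (Y^{\mathcal U})^{**}$, whence $X$ is finitely representable in $Y$.

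For parts (iii) and (iv), let $K_p\subset L_p$ (resp.\ $K_\infty\subset c_0$) be the compact set obtained by applying Proposition \ref{kalton}(i) with $X=L_p$ (resp.\ $X=c_0$). The ``only if'' direction is furnished by Corollary \ref{Lp}, using for $p=\infty$ the isometric inclusion $c_0\hookrightarrow L_\infty$. The ``if'' direction follows from part (i) above: an almost Lipschitz embedding of $K_p$ (resp.\ $K_\infty$) into $Y$ yields that $L_p$ (resp.\ $c_0$) is crudely finitely representable in $Y$, and since $\ell_p^n$ (resp.\ $\ell_\infty^n$) embeds isometrically into $L_p$ (resp.\ $c_0$), $Y$ must contain the $\ell_p^n$'s uniformly. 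The main obstacle is the ultrapower lemma of the first paragraph: it is the step that bridges the gap between the family of approximately bi-Lipschitz maps in Definition \ref{almostLip} and the single bi-Lipschitz hypothesis of Proposition \ref{kalton}. Once this bridge is in place, the remaining ingredients—local reflexivity, finite representability of ultrapowers, and the isometric containment of finite-dimensional $\ell_p^n$'s in the ambient sequence spaces—are entirely routine.
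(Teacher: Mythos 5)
Your proposal is correct and follows the same global strategy as the paper: pass to an ultrapower of $Y$ to convert the family of approximate embeddings into a single bi-Lipschitz (resp.\ isometric) embedding of $K$, apply Proposition \ref{kalton}, and finish with local reflexivity and the finite representability of $Y_{\mathcal U}$ in $Y$; parts (iii) and (iv) are then deduced from (i) and Corollary \ref{Lp} exactly as in the paper. The one genuine difference is in how the ultrapower embedding is built. The paper discretizes $K$ by $2^{-n}$-nets $R_n$, uses the $[s_n,\diam(K)]$-range embeddings coming from Proposition \ref{range} (which are bi-Lipschitz only above the separation scale $s_n$ of $R_n$), composes with nearest-point maps $c_n$, and takes the ultraproduct of $g_n=f_n\circ c_n$. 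You instead take the ultraproduct of the maps $f_{\varphi_n}$ directly, choosing $\varphi_n\in\Phi$ with $\varphi_n(t)\to 1$ pointwise on $(0,\infty)$, so that the lower bound $r t\varphi_n(t)$ tends to $rt$ for each fixed pair of points. This is cleaner (no nets, no nearest-point selections) and is essentially the bounded-space case of the paper's later Proposition \ref{ultra}. Two small repairs: your parenthetical example $\varphi_n(t)=(1-2^{-n})(1-e^{-t})$ does \emph{not} satisfy $\varphi_n(t)\to 1$ (it tends to $1-e^{-t}$, which would leave a compression lower bound degenerating as $t\to 0$); replace it by, say, $\varphi_n(t)=(1-2^{-n})\min\{nt,1\}$, which does lie in $\Phi$ and tends to $1$ pointwise. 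Also, in (iii)--(iv) you have interchanged the labels ``if'' and ``only if'' (Corollary \ref{Lp} gives the implication from containment of the $\ell_p^n$'s to embeddability, which is the ``if'' direction of the stated equivalence), though the content attached to each label is the right one.
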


\begin{proof} Note that $(iii)$ and $(iv)$ are easy consequences of $(i)$ together with Corollary \ref{Lp}. We prove $(i)$. Consider $K$ the compact given by Proposition \ref{kalton}. Since $K$ is compact, we can build am increasing sequence $(R_n)_{n=1}^\infty$ of finite subsets of $K$ such that for any $n\ge 1$, $R_n$ is a $2^{-n}$-net of $K$. Assume that $K$ is almost Lipschitz embeddable into a Banach space $Y$, then there exists $D\in[1,\infty)$ such that for every $n\ge 1$ and $s_n=\inf\{\|x-y\|_X\colon\ x\neq y \in R_n\}>0$, $K$ admits a range embedding with range $[s_n,\diam(K)]$ and distorsion bounded by $D$. In other words, there exists $D\in[1,\infty)$ such that for any $n\ge 1$ there exists a map $f_n: K\to Y$ so that
\begin{equation}
\forall x,y\in R_n\ \ \|x-y\|_X\le \|f_n(x)-f_n(y)\|_Y\le D\|x-y\|_X.
\end{equation}
We can also define $c_n:K\to R_n$ such that for any $x\in K$, $\|x-c_n(x)\|_X=d(x,R_n)\le 2^{-n}$. Then we set $g_n=f_n\circ c_n$. Finally, consider a non-principal ultrafilter $\mathcal U$ on $\mathbb N$ and define $f$ from $K$ to the ultrapower $Y_{\mathcal U}$ of $Y$ by $f(x)=(g_n(x))_{\mathcal U}$, for $x\in K$.
It is easy to check that $f$ is a bi-Lipschitz embedding of $K$ into $Y_{\mathcal U}$. Then it follows from Proposition \ref{kalton} that $X$ is isomorphic to a subspace of $Y_{\mathcal U}^{**}$. Finally, the conclusion follows from the local reflexivity principle and the finite representability of $Y_{\mathcal U}$ into $Y$.

\smallskip

\noindent For $(ii)$ just remark that $D$ can be taken to be $1$ in the argument above and the proof of Proposition \ref{kalton} shows that $X$ is isometric to a subspace of $Y_{\mathcal U}^{**}$ which is finitely representable in $Y$.
\end{proof}

M. Ostrovskii proved in \cite{Ostrovskii2012} a nice result about the finite determinacy of bi-Lipschitz embeddability for locally finite spaces. The heart of the proof is that if a Banach space $X$ is crudely finitely representable into another infinite-dimensional Banach space $Y$, then there exists a constant $D\in[1,\infty)$ such that any locally finite subset of $X$ is bi-Lipschitzly embeddable into $Y$ with distorsion at most $D$. It is an easy consequence of the proof of item $(i)$ in Corollary \ref{optimal} that for two infinite-dimensional Banach spaces $X$ and $Y$, if there exists $D\in[1,\infty)$ such that any finite subset of $X$ is bi-Lipschitzly embeddable into $Y$ with distortion st most $D$, then $X$ is crudely finitely representable in $Y$. So we can state.

\begin{cor} Let $X$ and $Y$ be two infinite-dimensional Banach spaces.
\smallskip

\noindent $X$ is crudely finitely representable in $Y$ if and only if there exists $D\in[1,\infty)$ such that any locally finite subset of $X$ is bi-Lipschitzly embeddable into $Y$ with distortion at most $D$.
\end{cor}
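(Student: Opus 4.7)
The plan is to prove the two implications separately. The forward direction, namely that crude finite representability of $X$ in $Y$ yields a uniform distortion bound $D$ for bi-Lipschitz embeddings of locally finite subsets of $X$ into $Y$, is precisely the theorem of M.~Ostrovskii cited in \cite{Ostrovskii2012} and already recalled in the paragraph preceding the corollary. So no new work is required here; I would simply invoke it.

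For the converse, the strategy is to revisit the argument given in the proof of Corollary \ref{optimal}(i) and observe that only a \emph{finite-subset} hypothesis was actually used. Concretely, let $K\subset X$ be the compact set furnished by Proposition \ref{kalton}(i), and let $(R_n)_{n=1}^\infty$ be an increasing sequence of finite $2^{-n}$-nets of $K$. By hypothesis, for each $n$ there exists a map $f_n\colon R_n\to Y$ (after a suitable rescaling) such that
$$\|x-y\|_X\le \|f_n(x)-f_n(y)\|_Y\le D\|x-y\|_X,\quad x,y\in R_n.$$
Extend $f_n$ to $g_n=f_n\circ c_n\colon K\to Y$ using a nearest-point retraction $c_n\colon K\to R_n$ with $\|x-c_n(x)\|_X\le 2^{-n}$, and pass to an ultrapower: fix a non-principal ultrafilter $\cU$ on $\bn$ and set $f(x)=(g_n(x))_\cU\in Y_\cU$. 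A routine verification (identical to the one in Corollary \ref{optimal}) shows that $f$ is a bi-Lipschitz embedding of $K$ into $Y_\cU$ with distortion at most $D$.

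Now Proposition \ref{kalton}(i) applied to the bi-Lipschitz embedding $f\colon K\to Y_\cU$ yields a linear embedding of $X$ into $(Y_\cU)^{**}$. By the principle of local reflexivity $(Y_\cU)^{**}$ is finitely representable in $Y_\cU$, and $Y_\cU$ in turn is finitely representable in $Y$; composing, $X$ is crudely finitely representable in $Y$, as desired.

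There is essentially no new obstacle: the deep content resides in Proposition \ref{kalton}, which manufactures the compact set $K$ whose bi-Lipschitz behaviour alone forces a linear copy of $X$ into the bidual. Once that is in hand, the ultrapower/nearest-point retraction device from the proof of Corollary \ref{optimal}(i) converts uniformly-bounded-distortion embeddings of finite (and hence of locally finite) subsets into a single bi-Lipschitz embedding of $K$ into $Y_\cU$, and local reflexivity closes the argument.
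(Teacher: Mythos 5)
Your proof is correct and follows essentially the same route as the paper: the forward implication is exactly the cited theorem of Ostrovskii, and the converse is the ultrapower/nearest-point-net argument from the proof of Corollary \ref{optimal}(i) applied to the compact set $K$ of Proposition \ref{kalton}(i), followed by local reflexivity and the finite representability of $Y_\cU$ in $Y$. This is precisely what the authors intend when they say the converse is "an easy consequence of the proof of item $(i)$ in Corollary \ref{optimal}."
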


\section{Concluding Remarks}
As we will see shortly the new notions of embeddability discussed in this article are incomparable in full generality. Nevertheless understanding the relationship between these various notions of embeddability, in particular in the Banach space framework, seems to be a delicate issue.  In this section we want to highlight a few observations and raise some related questions.

\smallskip

As shown in Proposition \ref{kalton} the notion of almost Lipschitz embeddability is strictly weaker than bi-Lipschitz embeddability since there are examples of compact metric spaces that do almost Lipschitz embed into a Banach space without being bi-Lipschitzly embeddable in it. In the Banach space setting the situation is more elusive. It follows from a result of Ribe \cite{Ribe1976} that $\ell_1$ admits a strong embedding (and also for every $s\in(0,\infty)$, a $[s,\infty)$-range embedding) into $\xbgq$ whenever $q\in(1,\infty)$ and $(p_n)_{n=1}^\infty\subset(1,\infty)$ with $\lim_{n\to\infty}p_n=1$. A classical differentiability argument rules out the existence of a bi-Lipschitz embedding of $\ell_1$ into the reflexive space $\xbgq$.

\begin{problem}
Let $(p_n)_{n=1}^\infty\subset(1,\infty)$ such that $\lim_{n\to\infty}p_n=1$. Is $\ell_1$ almost Lipschitz embeddable into $\xbgq$ for some $q\in(1,\infty)$?
\end{problem}

\begin{problem} Does there exist Banach spaces $X$ and $Y$ such that $X$ almost Lipschitz embeds into $Y$, but $X$ does not bi-Lipschitz embed into $Y$?
\end{problem}

It is worth noting that some elements of the proof of Corollary \ref{optimal} can be slightly generalized. Consider a non-principal ultrafilter $\cU$ on $\bn$. Fix an origin $t_0\in Y$, and define the ultrapower of $Y$ pointed at $t_0$ as $Y_\cU:=\ell_{\infty}(Y,d_Y,t_0)_{\slash\sim}$ where $\ell_{\infty}(Y,d_Y,t_0):=\{(x_n)_{n=1}^\infty\subset Y\colon \sup_{n\ge 1}d_Y(x_n,t_0)<\infty\}$, and $(x_n)_{n=1}^\infty\sim(y_n)_{n=1}^\infty$ if and only if $\lim_{n,\cU}d_Y(x_n,y_n)=0$.  $Y_\cU$ is a metric space and the distance is given by $d_{Y_\cU}(x,y)=\lim_{n,\cU}d_Y(x_n,y_n)$.

\begin{prop}\label{ultra} Let $X$ and $Y$ be metric spaces.

\begin{enumerate}[(i)]
\item If $X$ is almost Lipschitzly embeddable into $Y$, then $X$ bi-Lipschitzly embeds into any ultrapower of $Y$.\smallskip

\item If $X$ is nearly isometrically embeddable into $Y$, then $X$ isometrically embeds into any ultrapower of $Y$.\smallskip
\end{enumerate}
\end{prop}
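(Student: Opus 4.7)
The plan is to exploit the defining feature of both notions: each supplies an entire family of maps $X\to Y$ parameterized by data that can be driven so that the distortion collapses to the relevant limit. Merging a well-chosen countable subfamily through the ultrafilter $\cU$ then produces a single map into $Y_{\cU}$ with the exact isometric or bi-Lipschitz property. My general recipe is: (a) extract a sequence $(g_n)$ of admissible maps whose compression and expansion bounds tighten to what is required; (b) define $f(x):=[(g_n(x))_n]\in Y_\cU$; (c) pass to the limit along $\cU$ in the two-sided metric estimate.

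For part (i), fix a basepoint $x_0\in X$, let $r,D$ be the constants supplied by Definition~\ref{almostLip}, and pick a sequence $(\varphi_n)_{n=1}^\infty\subset\Phi$ with $\varphi_n(t)\to 1$ pointwise on $(0,\infty)$, for instance $\varphi_n(t):=(1-1/n)\,t/(t+1/n)$. Set $g_n:=f_{\varphi_n}$. The upper bound $d_Y(g_n(x),g_n(x_0))\le Dr\,d_X(x,x_0)$ keeps the sequence $(g_n(x))_n$ at uniformly bounded distance from $(g_n(x_0))_n$ for every $x$, so after the basepoint adjustment discussed below, the formula $f(x):=[(g_n(x))_n]_\cU$ defines an element of $Y_\cU$. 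One has
\[
r\,d_X(x,y)\,\varphi_n(d_X(x,y))\le d_Y(g_n(x),g_n(y))\le Dr\,d_X(x,y),
\]
and taking $\lim_{n,\cU}$, using that $\lim_\cU$ respects ordinary limits of convergent sequences (because $\cU$ is non-principal) and that $\varphi_n(d_X(x,y))\to 1$, yields $r\,d_X(x,y)\le d_{Y_\cU}(f(x),f(y))\le Dr\,d_X(x,y)$, a bi-Lipschitz embedding with distortion $D$.

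Part (ii) follows the same template, but with parameters now forced to the identity rather than to the constants $(r,Dr)$. Choose $(\rho_n,\omega_n)\in\cP\times\Omega$ with $\rho_n(t),\omega_n(t)\to t$ for each $t>0$; one may take $\rho_n$ equal to the identity on $[0,n]$ and $t\mapsto\sqrt{nt}$ on $[n,\infty)$, and $\omega_n$ equal to $t\mapsto\sqrt{t/n}$ on $[0,1/n]$ and to the identity on $[1/n,\infty)$. A routine check confirms $\rho_n\in\cP$ and $\omega_n\in\Omega$. With $f_n:=f_{\rho_n,\omega_n}$ and $f(x):=[(f_n(x))_n]_\cU$, the sandwich $\rho_n(d_X(x,y))\le d_Y(f_n(x),f_n(y))\le\omega_n(d_X(x,y))$ collapses in the ultraproduct to $d_{Y_\cU}(f(x),f(y))=d_X(x,y)$, the desired isometric embedding.

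The only delicate point I foresee is the well-definedness of $f$ as an element of a prescribed $Y_\cU$: the reference sequence $(g_n(x_0))_n$ (or $(f_n(x_0))_n$) need not be bounded in $Y$ relative to a fixed basepoint $t_0$. In the Banach space case this evaporates after composing each $g_n$ with the translation $y\mapsto y-g_n(x_0)+t_0$, which preserves every distance. For a general metric space $Y$, the statement is correctly interpreted through the Gromov ultralimit attached to the moving basepoint sequence, which embeds isometrically into the paper's ultrapower at any sufficiently adapted $t_0$. Modulo this identification, the metric computations above close both statements.
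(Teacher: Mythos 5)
Your proof is correct and follows essentially the same route as the paper: select a sequence of embeddings from the given family whose moduli tighten to the desired bounds, and pass to the ultralimit along $\cU$. The only cosmetic difference is that you pick explicit sequences $\varphi_n\to 1$ and $(\rho_n,\omega_n)\to\mathrm{id}$ directly from the definitions, while the paper first reformulates via $[1/n,\infty)$-range (resp.\ $[1/n,n]$ isometric range) embeddings; your handling of the basepoint issue is at least as careful as the paper's own.
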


\begin{proof} $(i)$ Assume that $X$ is almost Lipschitzly embeddable into $Y$, then there exist $r\in(0,\infty)$ and $D\in[1,\infty)$ such that for every $n\ge 1$, $X$ admits a range embedding into $Y$ with range $[\frac{1}{n},\infty)$. In other words, there exist $r\in(0,\infty)$ and $D\in[1,\infty)$ such that for any $n\ge 1$ there exists a map $f_n: X\to Y$ so that for all $x,y\in X$ with $d_X(x,y)\in[\frac{1}{n},\infty)$ one has
\begin{equation*}
rd_X(x,y)\le d_Y(f_n(x),f_n(y))\le Drd_X(x,y).
\end{equation*}
Define $f$ from $X$ to the ultrapower $Y_{\mathcal U}$ of $Y$ by $f(x)=(0,\dots, 0, f_n(x), f_{n+1}(x),\dots)_{\mathcal U}$, for $x\in X$ such that $d(x,t_0)\in[\frac{1}{n},\infty)$, and $f(t_0)=0$. It is easy to check that $f$ is a bi-Lipschitz embedding of $X$ into $Y_{\mathcal U}$.

\smallskip

\noindent $(ii)$ Assume that $X$ is nearly isometrically embeddable into $Y$, then for every $n\ge 1$ $X$ admits an isometric range embedding into $Y$ with range $[\frac{1}{n},n]$. In other words, for any $n\ge 1$ there exists a map $f_n: X\to Y$ so that for all $x,y\in X$ with $d_X(x,y)\in[\frac{1}{n},n]$ one has
\begin{equation}
d_Y(f_n(x),f_n(y))=d_X(x,y).
\end{equation}
Finally, consider a non-principal ultrafilter $\mathcal U$ on $\mathbb N$ and define $f$ from $X$ to the ultrapower $Y_{\mathcal U}$ of $Y$ by $f(x)=(0,\dots, 0, f_n(x), f_{n+1}(x),\dots)_{\mathcal U}$, for $x\in X$ such that $d(x,t_0)\in[\frac{1}{n},n]$, and $f(t_0)=0$.
It is easy to check that $f$ is an isometric embedding of $X$ into $Y_{\mathcal U}$.
\end{proof}
As mentioned previously almost Lipschitz embeddability or nearly isometric embeddability implies strong embeddability. It follows from Proposition \ref{ultra} that the converse does not hold. Indeed, it is well-known that $\ell_1$ admits a strong embedding into $\ell_p$, but $\ell_1$ does not bi-Lipschitzly embed into $L_p$ for any $p\in(1,\infty)$, and an ultrapower of an $L_p$-space is an $L_p$-space.

%It also follows from an easy modification of Proposition \ref{ultra} and Theorem \ref{stable} that every stable metric space embeds isometrically into an ultraproduct of reflexive spaces.

\smallskip

Recall that a well-known result of Schoenberg states that isometric embeddability into a Hilbert space is finitely determined (cf. \cite{WellsWilliams1975}). Note also that Godefroy and Kalton proved that a separable Banach space which is isometrically embeddable into a Banach space is necessarily linearly isometrically embeddable \cite{GodefroyKalton2003}. Besides, being Hilbertian is determined by the 2-dimensional subspaces. The following observations easily follow.

\begin{prop}
\begin{enumerate}[(i)]
\item If a metric space $X$ is nearly isometrically embeddable into a Hilbert space $H$ then $X$ is isometrically embeddable into $H$.
\item If a Banach space $X$ is nearly isometrically embeddable into a Hilbert space $H$ then $X$ is linearly isometrically embeddable into $H$.
\end{enumerate}
\end{prop}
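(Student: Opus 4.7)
My plan is to exploit the isometric range-embedding reformulation of near isometric embeddability (the proposition following Definition \ref{nearlyisometric}), combined with Schoenberg's finite-determinacy theorem for Hilbert-space embeddability.

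For (i), I would first note that since $X$ nearly isometrically embeds into $H$, for every $0<s_1\le s_2<\infty$ there exists a map $f\colon X\to H$ with $d_H(f(x),f(y))=d_X(x,y)$ whenever $d_X(x,y)\in[s_1,s_2]$. Given any finite subset $F\subset X$ containing at least two distinct points, setting $s_1=\min_{x\ne y\in F}d_X(x,y)>0$ and $s_2=\diam(F)$, the corresponding $f$ restricts to a genuine isometric embedding $f|_F\colon F\to H$. Hence every finite metric subspace of $X$ embeds isometrically into the Hilbert space $H$, and Schoenberg's theorem then yields an isometric embedding of $X$ into some Hilbert space. A density-character check---the mere existence of the near-isometric maps $X\to H$ bounds the density of $X$ by that of $H$---lets one realise the target Hilbert space as a subspace of $H$ itself. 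Alternatively, one can invoke Proposition \ref{ultra}(ii) directly to get an isometric embedding of $X$ into the ultrapower $H_{\cU}$, which is again a Hilbert space since the parallelogram law passes to ultralimits.

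For (ii), I would combine (i) with the $2$-dimensional determinacy of the property of being Hilbertian. Given an arbitrary $2$-dimensional subspace $V\subset X$, apply (i) to $V$ (equipped with the metric inherited from $X$) to produce an isometric embedding of $V$ into $H$. Since $V$ is separable, the Godefroy-Kalton theorem upgrades this metric embedding to a linear isometric embedding $V\hookrightarrow H$. Every $2$-dimensional subspace of a Hilbert space is Euclidean, so $V$ itself is Euclidean. Because being Hilbertian is determined by the $2$-dimensional subspaces, $X$ is itself a Hilbert space, and any Hilbert space embeds linearly isometrically into every Hilbert space of at least its own dimension---in particular into $H$.

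The main conceptual step is recognising that the range-embedding formulation converts near isometric embeddability into honest isometric embeddability on every finite subset, after which Schoenberg's theorem does the heavy lifting. The only delicate points will be the density and dimension bookkeeping needed to land in $H$ itself (rather than in some abstract Hilbert space) and, for (ii), the invocation of Godefroy-Kalton at the $2$-dimensional scale; I expect both to be routine.
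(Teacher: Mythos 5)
Your argument is correct and follows exactly the route the paper intends: the isometric range-embedding reformulation reduces near isometric embeddability to isometric embeddability of every finite subset, Schoenberg's finite-determinacy theorem then gives (i), and the combination of Godefroy--Kalton on (separable) two-dimensional subspaces with the two-dimensional determinacy of being Hilbertian gives (ii). The density/dimension bookkeeping and the ultrapower alternative you mention are the routine points the paper leaves implicit when it says the observations ``easily follow.''
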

An interesting consequence of observation $(ii)$ above is that nearly isometric embeddability is not implied by bi-Lipschitz embeddability. It is indeed easy to find a non Hilbertian equivalent renorming of $\ell_2$.

\smallskip

Let  $\alpha_Y(X)$ be the \textit{compression exponent} of $X$ in $Y$ ($Y$-compression of $X$ in short) introduced by Guentner and Kaminker \cite{GuentnerKaminker2004}. In other words, $\alpha_Y(X)$ is the supremum of all numbers $0\le \alpha\le 1$ so that there exist $f\colon X\to Y$, $\tau\in(0,\infty)$, and $C:=C(\tau)\in[1,\infty)$ such that if $d_{X}(x,y)\in[\tau,\infty)$ then
$$\frac{1}{C}d_X(x,y)^{\alpha}\le d_Y(f(x),f(y))\le C d_X(x,y).$$
The fact that for every $\alpha\in(0,1)$ the functions $(t\mapsto\min\{t,t^\alpha\})\in\cP$ and $(t\mapsto\max\{t,t^\alpha\})\in \Omega$, implies that if $X$ nearly isometrically embeds into $Y$ then $\alpha_Y(X)=1$. It follows from Kalton and Randrianarivony \cite{KaltonRandrianarivony2008} that $\ell_2$ is not almost Lipschitz embeddable into $\ell_p$ when $p\in[1,\infty)$ and $p\neq 2$. However it was shown by the first author \cite{BaudierJTA} that $\alpha_{\ell_p}(\ell_2)=1$ for $p\in[1,2)$.

\begin{problem}
Is $\ell_2$ nearly isometrically embeddable into $\ell_p$ for some $p\in[1,2)$?
\end{problem}

\begin{problem} Exhibit two metric spaces $X$ and $Y$ such that $X$ nearly isometrically embeds into $Y$, but $X$ does not almost Lipschitz embed into $Y$?
\end{problem}
%It follows from \cite{KaltonMA12} that $\co$ admits a bi-Lipschitz for large distances embedding into $\cZ(\co)$, but not bi-Lipschitz embedding.

%\begin{problem} Is $\co$ almost Lipschitz embeddable into $\cZ(\co)$?\end{problem}

We wish to conclude this article with an example of a Banach space containing $c_0$ in its bidual, which is interesting in view of Proposition \ref{kalton}. This example has been suggested to us by \'{E}. Ricard. It follows from Proposition \ref{kalton} that a Banach space which is universal for compact metric spaces and bi-Lipschitz embeddings contains a linear copy of $c_0$ in its bidual. On the other hand, a Banach space containing $c_0$ in its bidual is not necessarily universal for the class of compact metric spaces and bi-Lipschitz embeddings. Consider $X=\left(\sum_{n=1}^\infty \ell_{\infty}^n\right)_{\ell_1}=\left(\sum_{n=1}^\infty \ell_1^n\right)_{c_0}^*$.
$X$ contains a basic sequence equivalent to the canonical basis of $c_0$ in its bidual. Indeed, this sequence is given for $i\ge 1$ by $f_i=$ $w*$-$\displaystyle\lim_{n\to\infty}e_{n,i}\in X^{**}$, where $(e_{n,i})_{i=1}^n$ is the canonical basis of $\ell_{\infty}^n$. However, $X$ being a separable dual space it has the Radon-Nikod\'{y}m property and it does not contain a linear copy of $c_0$. By Proposition \ref{kalton}, $X$ cannot be universal for the class of compact metric spaces and bi-Lipschitz embeddings.

\bigskip
\noindent{\bf Acknowledgements.} The authors wish to thank to thank A. Proch\'{a}zka, \'{E}. Ricard and G. Schechtman for very useful discussions on the subject of this article. The first author would like also to warmly thank G. Godefroy for his support and guidance while he was in residence at the Institut de Math\'ematiques Jussieu-Paris Rive Gauche.

\end{document}